\newtheorem{Theorem}{Theorem}[section]
\newtheorem{Proposition}[Theorem]{Proposition}
\newtheorem{Lemma}[Theorem]{Lemma}
\newtheorem{Corollary}[Theorem]{Corollary}
\theoremstyle{definition}
\newtheorem{Definition}[Theorem]{Definition}
\newtheorem{Remark}[Theorem]{Remark}
\newcommand{\bTheorem}[1]{
\begin{Theorem} \label{T#1} }
\newcommand{\eT}{\end{Theorem}}
\newcommand{\bProposition}[1]{
\begin{Proposition} \label{P#1}}
\newcommand{\eP}{\end{Proposition}}
\newcommand{\bLemma}[1]{
\begin{Lemma} \label{L#1} }
\newcommand{\eL}{\end{Lemma}}
\newcommand{\bCorollary}[1]{
\begin{Corollary} \label{C#1} }
\newcommand{\eC}{\end{Corollary}}
\newcommand{\bRemark}[1]{
\begin{Remark} \label{R#1} }
\newcommand{\eR}{\end{Remark}}
\newcommand{\bDefinition}[1]{
\begin{Definition} \label{D#1} }
\newcommand{\eD}{\end{Definition}}
\newcommand{\Ds}{\mathbb{D}_x}
\newcommand{\bfphi}{\boldsymbol{\varphi}}
\newcommand{\bFormula}[1]{
\begin{equation} \label{#1}}
\newcommand{\eF}{\end{equation}}
\newcommand{\Ov}[1]{\overline{#1}}
\newcommand{\aleq}{\stackrel{<}{\sim}}
\newcommand{\ageq}{\stackrel{>}{\sim}}
\newcommand{\vr}{\varrho}
\newcommand{\tvr}{\tilde \vr}
\newcommand{\tvu}{{\tilde \vu}}
\newcommand{\vu}{\vc{u}}
\newcommand{\vm}{\vc{m}}
\newcommand{\vc}[1]{{\bf #1}}
\newcommand{\Div}{{\rm div}_x}
\newcommand{\Grad}{\nabla_x}
\newcommand{\dx}{\,{\rm d} {x}}
\newcommand{\dt}{\,{\rm d} t }
\newcommand{\intO}[1]{\int_{\Omega} #1 \ \dx}
\newcommand{\D}{{\rm d}}
\newcommand{\ep}{\varepsilon}
\newcommand{\R}{\mathbb{R}}
\def\softd{{\leavevmode\setbox1=\hbox{d}%
          \hbox to 1.05\wd1{d\kern-0.4ex{\char039}\hss}}}
\definecolor{Cgrey}{rgb}{0.85,0.85,0.85}
\definecolor{Cblue}{rgb}{0.50,0.85,0.85}
\definecolor{Cred}{rgb}{1,0,0}
\definecolor{fancy}{rgb}{0.10,0.85,0.10}
\newcommand\Cbox[2]{%
    \newbox\contentbox%
    \newbox\bkgdbox%
    \setbox\contentbox\hbox to \hsize{%
        \vtop{
            \kern\columnsep
            \hbox to \hsize{%
                \kern\columnsep%
                \advance\hsize by -2\columnsep%
                \setlength{\textwidth}{\hsize}%
                \vbox{
                    \parskip=\baselineskip
                    \parindent=0bp
                    #2
                }%
                \kern\columnsep%
            }%
            \kern\columnsep%
        }%
    }%
    \setbox\bkgdbox\vbox{
        \color{#1}
        \hrule width  \wd\contentbox %
               height \ht\contentbox %
               depth  \dp\contentbox
        \color{black}
    }%
    \wd\bkgdbox=0bp%
    \vbox{\hbox to \hsize{\box\bkgdbox\box\contentbox}}%
    \vskip\baselineskip%
}
\date{}
\begin{document}


\title{On the long--time behavior of dissipative solutions to models of non-Newtonian compressible fluids}

\author{Eduard Feireisl
\thanks{The work of E.F. was partially supported by the
Czech Sciences Foundation (GA\v CR), Grant Agreement
18--05974S. The Institute of Mathematics of the Academy of Sciences of
the Czech Republic is supported by RVO:67985840.} \and Young--Sam Kwon
\thanks{The work of Y.--S. K. was partially supported by
the National Research Foundation of Korea (NRF-2017R1D1A1B03030249  and NRF-2019H1D3A2A01101128) }
\and Anton\' \i n Novotn\' y { \thanks{The work of A.N. was supported by Brain Pool program funded by the Ministry of Science and ICT through the National Research Foundation of Korea (NRF-2019H1D3A2A01101128).}}
}


\maketitle

\centerline{Institute of Mathematics of the Academy of Sciences of the Czech Republic;}
\centerline{\v Zitn\' a 25, CZ-115 67 Praha 1, Czech Republic}

\centerline{Institute of Mathematics, Technische Universit\"{a}t Berlin,}
\centerline{Stra{\ss}e des 17. Juni 136, 10623 Berlin, Germany}
\centerline{feireisl@math.cas.cz}

\centerline{and}

\centerline{Department of Mathematics, Dong-A University}
\centerline{Busan 49315, Republic of Korea}
\centerline{ykwon@dau.ac.kr}

\centerline{and}

\centerline{IMATH, EA 2134, Universit\'e de Toulon,}
\centerline{BP 20132, 83957 La Garde, France}
\centerline{novotny@univ-tln.fr}

\begin{abstract}

We identify a class \emph{maximal} dissipative solutions to models of compressible viscous fluids that maximize the energy dissipation rate. Then we show that any maximal dissipative solution approaches an equilibrium state for large times.

\end{abstract}

{\bf Keywords:} Non-Newtonian fluid, compressible fluid, dissipative solution, long--time behavior

{\bf MSC:} 35Q35, 35B40, 35D99
\bigskip


\section{Problem formulation}
\label{P}

We consider a mathematical model of a compressible viscous fluid occupying a bounded physical domain
$\Omega \subset \R^d$, $d=2,3$. The state of the fluid at a given time $t \geq 0$ and a spatial position $x \in \Omega$
is characterized by the mass density $\vr = \vr(t,x)$ and the bulk velocity $\vu = \vu(t,x)$ satisfying the following
system of partial differential equations:

\begin{equation} \label{P1}
\begin{split}
\partial_t \vr + \Div (\vr \vu) &= 0, \\
\partial_t (\vr \vu) + \Div (\vr \vu \otimes \vu) + \Grad p(\vr) &= \Div \mathbb{S},
\end{split}
\end{equation}
where $p$ is the pressure and $\mathbb{S}$ the viscous stress tensor. The viscous stress is related to the
symmetric velocity gradient
\[
\Ds \vu = \frac{\Grad \vu + \Grad \vu^t}{2}
\]
through a general \emph{rheological law}
\begin{equation} \label{P2}
\mathbb{S} \in \partial F(\Ds),
\end{equation}
where $\partial F$ is the subdifferential of a convex potential $F$. In view of Fenchel--Young identity,
the relation \eqref{P2} can be written in an ``implicit'' form
\begin{equation} \label{P3}
\mathbb{S}: \Ds \vu =
F(\Ds \vu) + F^*(\mathbb{S}),
\end{equation}
where $F^*$ is the conjugate of $F$. Finally, we consider the
no--slip boundary conditions
\begin{equation} \label{P4}
\vu|_{\partial \Omega} = 0,
\end{equation}
together with the initial conditions
\begin{equation} \label{P5}
\vr(0, \cdot) = \vr_0, \ \vr \vu(0, \cdot) = \vm_0.
\end{equation}

Smooth solutions of \eqref{P1}--\eqref{P5} satisfy the total energy balance
\begin{equation} \label{P6}
\intO{ E(\vr, \vm)(\tau, \cdot) } + \int_0^\tau \intO{ \left( F(\Ds \vu) + F^*(\mathbb{S}) \right) } \dt =
\intO{ E(\vr_0, \vm_0) }
\end{equation}
for any $\tau \geq 0$, where $E$ is the total energy,
\[
E = \frac{1}{2} \frac{|\vm|^2}{\vr} + P(\vr),\
\vm \equiv \vr \vu,\ P'(\vr) \vr - P(\vr) = p(\vr).
\]
In addition, in view of \eqref{P1}, \eqref{P4}, the total mass of the fluid is a conserved quantity,
\begin{equation} \label{P7}
M = \intO{ \vr(\tau, \cdot) } = \intO{ \vr_0 }
\end{equation}
for any $\tau \geq 0$. In accordance with the Second law of thermodynamics, the dissipation potential $F$ must satisfy
\[
F(\Ds \vu) + F^*(\mathbb{S}) \geq 0;
\]
whence the equilibrium (time independent) states $[\tvr, \tvu]$ satisfy
\begin{equation} \label{P8}
F(\Ds \tvu) + F^*(\widetilde{\mathbb{S}}) = 0, \ \tvu|_{\partial \Omega} = 0,\
\intO{ \tvr } = M.
\end{equation}
For \emph{real} fluids, the dissipation is always present therefore \eqref{P8} implies
\[
\tvu = 0,\ \widetilde{\mathbb{S}} = 0;
\]
whence, in accordance with \eqref{P1}
\[
\partial_t \tvr = 0,\ \Grad p(\tvr) = 0.
\]
Thus if the pressure is a strictly monotone (increasing) function of $\vr$, we may infer that
\begin{equation} \label{P9}
\tvr(x) = \Ov{\vr},\ \mbox{where}\ \Ov{\vr} > 0 \ \mbox{is a constant},\ \Ov{\vr} |\Omega| = M.
\end{equation}
In view of \eqref{P9}, it is convenient to fix the pressure potential $P$ in the energy,
\[
E\left(\vr, \vm \Big| \Ov{\vr} \right) =  \frac{1}{2} \frac{|\vm|^2}{\vr} + P(\vr) - P'(\Ov{\vr})(\vr - \Ov{\vr}) -
P(\Ov{\vr}).
\]
The energy being a convex function of $[\vr, \vm]$, the quantity $E\left(\vr, \vm \Big| \Ov{\vr} \right)$ can be interpreted as
the Bregman distance between $[\vr, \vm]$ and the equilibrium state $[\Ov{\vr}, 0]$.

Our goal is to study the long--time behavior of solutions to the problem \eqref{P1}--\eqref{P5}, in particular, we show that any individual trajectory approaches a single equilibrium determined uniquely by the total mass of the fluid. To the best of our knowledge,
the problem of \emph{global existence} for the problem \eqref{P1}--\eqref{P5} is largely open even in the class of weak 
(distributional) solutions; the only exception being the Navier--Stokes system, where both $F$ and $F^*$ are quadratic, and
the global existence of weak solutions was shown by Lions
\cite{LI4} and extended in \cite{FNP}, and the problem with linear pressure and exponentially growing
viscosity coefficients studied by Mamontov \cite{MAM1}, \cite{MAM2}.

In the light of the afore mentioned difficulties with global solvability, 
we consider the problem \eqref{P1}--\eqref{P5} in the framework
of \emph{dissipative solutions} introduced in \cite{AbbFeiNov}.
The leading idea is to replace the viscous stress $\mathbb{S}$ by $\mathbb{S}_{\rm eff} = \mathbb{S} -
\mathfrak{R}$, with an extra stress $\mathfrak{R}$ called Reynolds stress,
\[
\mathfrak{R}(\tau) \in \mathcal{M}^+(\Ov{\Omega}; \R^{d \times d}_{\rm sym}), \ \tau > 0,
\]
where $\mathcal{M}^+(\Ov{\Omega}; \R^{d \times d}_{\rm sym})$ is the set of positively semi--definite matrix--valued measures
in $\Ov{\Omega}$.

The dissipative solutions satisfy
\begin{equation} \label{P10}
\begin{split}
\partial_t \vr + \Div (\vr \vu) &= 0, \\
\partial_t (\vr \vu) + \Div (\vr \vu \otimes \vu) + \Grad p(\vr) &= \Div \mathbb{S}_{\rm eff},
\end{split}
\end{equation}
together with the energy inequality
\begin{equation} \label{P11}
\begin{split}
\frac{{\rm d}}{{\rm d}t} \left[ \intO{ E\left( \vr, \vm \Big| \Ov{\vr} \right) (\tau, \cdot) } + D \int_{\Ov{\Omega}} \D\ {\rm tr} [\mathfrak{R}](\tau)\right]  &+
\intO{ \Big( F(\Ds \vu) + F^* (\mathbb{S}_{\rm eff} + \mathfrak{R}) \Big) } \dt \leq 0,\\
\left[ \intO{ E\left( \vr, \vm \Big| \Ov{\vr} \right)  } + D \int_{\Ov{\Omega}} \D\ {\rm tr} [\mathfrak{R}] \right]\
&\leq \intO{ E\left( \vr_0, \vm_0 \Big| \Ov{\vr} \right) },
\end{split}
\end{equation}
where $D > 0$ is a constant determined solely by the structural properties of $F$ and $p$, see Section \ref{W} for details.

Although the class of dissipative solutions is apparently larger than that of conventional weak (distributional) solutions,
they still enjoy the following properties:

\begin{itemize}
\item {\bf Existence.} The dissipative solutions exist globally--in--time for any finite energy initial data,
\[
\vr \in C_{{\rm weak, loc}}([0, \infty); L^\gamma(\Omega)),\
\vm \equiv \vr \vu \in C_{{\rm weak,loc}}([0, \infty); L^{\frac{2 \gamma}{\gamma + 1}}(\Omega; \R^d))
\ \mbox{for some}\ \gamma > 1,
\]
see \cite[Theorem 3.8]{AbbFeiNov}.

\item {\bf Compatibility.} Any dissipative solution $[\vr, \vu]$, $\vr > 0$, that is continuously differentiable is in fact a classical solution of the problem, in particular
\[
\mathfrak{R} = 0, \ \mathbb{S}_{\rm eff} = \mathbb{S}, \ \mathbb{S} \in \partial F(\Ds \vu),
\]
see \cite[Theorem 4.1]{AbbFeiNov}.

\item {\bf Weak--strong uniqueness.}
A dissipative solution coincides with the strong solution emanating from the same initial data as long as the latter solution
exists, see \cite[Theorem 6.3]{AbbFeiNov}.

\end{itemize}

The terminology ``dissipative solution'' was first used by Lions \cite{LI} in the context of the Euler system, where the equations
are simply replaced by the associated relative energy inequality. Brenier \cite{Breni} proposed an alternative approach
to construct generalized solutions of the Euler system via maximization of a concave functional. Our concept of dissipative solution
is closer to the measure--valued solution in the spirit of DiPerna's pioneering work \cite{DiP2}, see also the monograph
M\' alek et al. \cite{MNRR} and the references cited therein. The key observation is that the oscillation and concentration defects can be
conveniently unified giving rise to a single positively definite Reynolds stress, the trace of which is controlled by the energy
dissipation defect.

Anticipating that dissipative solutions are possibly not uniquely determined by the initial data $[\vr_0, \vm_0]$, we identify a
smaller class of \emph{maximal} dissipative solutions -- the dissipative solutions with a maximal rate of energy dissipation.
We show that maximal dissipative solutions exist for any finite energy initial data, and, in addition, they enjoy the following remarkable property:
\begin{equation} \label{P12}
\| \mathfrak{R}(\tau) \|_{\mathcal{M}(\Ov{\Omega}; \R^{d \times d}_{\rm sym})} \to 0
\ \mbox{as}\ \tau \to \infty.
\end{equation}
In other words, the maximal dissipative solutions behave like the conventional weak solutions in the long run.

Finally, imposing some technical hypotheses on $F$ and $p$ we show that any maximal dissipative solution $[\vr, \vm]$ approaches
an equilibrium state for large times:
\begin{equation} \label{P13}
\vm = \vr \vu (\tau, \cdot) \to 0 \ \mbox{in}\ L^{\frac{2 \gamma}{\gamma + 1}}(\Omega; \R^d),\
\vr(\tau, \cdot) \to \Ov{\vr} \ \mbox{in}\ L^\gamma(\Omega) \ \mbox{as}\ \tau \to \infty.
\end{equation}

The paper is organized as follows. In Section \ref{W}, we recall the concept of dissipative solution and introduce the class of
solutions with maximal energy dissipation. In Section \ref{L}, we study the long--time behavior of maximal solutions.
In particular, we show \eqref{P12}, see Theorem \ref{TL1}. In Sections \ref{C}, we introduce additional hypotheses to be imposed
on $F$ and $p$ as well as on the dissipative solution in order to prove \eqref{P13}. Then we show a general result on convergence
for a special class of dissipative solution, see Theorem \ref{TC1}.
Finally, in Section \ref{CC} we show unconditional convergence to equilibrium for the dissipative solutions imposing only
extra restrictions on $p$ and $F$, see Theorem
\ref{TCC}. The paper is concluded by a short discussion concerning
possible extensions to driven systems in Section \ref{D}.

\section{Dissipative solutions}
\label{W}

We start by recalling the basic restrictions on the structural properties of $F$ and $p$ introduced in \cite{AbbFeiNov}.

The pressure $p = p(\vr)$, with the associated pressure potential $P(\vr)$,
\[
P'(\vr) \vr - P(\vr) = p(\vr),
\]
satisfy
\begin{equation} \label{S1}
\begin{split}
&p \in C[0, \infty) \cap C^2(0, \infty),\
p(0) = 0, \ p'(\vr) > 0 \ \mbox{for}\ \vr > 0,\ P(0) = 0, \\
&P - \underline{a} p,\ \Ov{a} p - P\ \mbox{are convex functions
for certain constants}\ \underline{a} > 0, \ \Ov{a} > 0.
\end{split}
\end{equation}
Note that the standard isentropic pressure $p(\vr) = a \vr^\gamma$ satisfies \eqref{S1} with
\[
\underline{a} = \Ov{a} = \frac{1}{\gamma - 1}.
\]
Without loss of generality, we may fix
\[
\underline{a} = \sup \left\{ a > 0 \ \Big|\ P - {a} p \ \mbox{is convex} \right\},\
\Ov{a} = \inf \left\{ a > 0 \ \Big|\ a p - P \ \mbox{is convex} \right\}.
\]
As shown in \cite[Section 2.1.1]{AbbFeiNov}, we have
\begin{equation} \label{S2}
P(\vr) \geq a \vr^\gamma \ \mbox{for certain}\ a > 0,\ \gamma = 1 + \frac{1}{\Ov{a}}, \ \mbox{and all}\ \vr \geq 1.
\end{equation}

The dissipative potential satisfies
\begin{equation} \label{S3}
F: \R^{d \times d}_{\rm sym} \to [0, \infty) \ \mbox{is a (proper) convex function},\ F(0) = 0.
\end{equation}
Moreover, for any $R > 0$ there exists a (Young) function $A_R$ satisfying
\begin{itemize}
\item $A: [0, \infty) \to [0, \infty)$ convex,
\item $A$ increasing,
\item $A(0) = 0$,
\item $a_1 A(z) \leq A(2z) \leq a_2 A(z) \ \mbox{for any}\ z \in [0, \infty), \ \mbox{where}\ a_1 > 2,\ a_2 < \infty$,
\end{itemize}
such that
\begin{equation} \label{S5}
F(\mathbb{D} + \mathbb{Q}) - F(\mathbb{D}) - \mathbb{S}: \mathbb{Q} \geq
A_R \left( \left| \mathbb{Q} - \frac{1}{d} {\rm tr}[\mathbb{Q}] \mathbb{I} \right| \right)
\end{equation}
for all $\mathbb{D}, \mathbb{S}, \mathbb{Q} \in R^{d \times d}_{\rm sym}$ such that
\[
|\mathbb{D}| \leq R,\ \mathbb{S} \in \partial {F}(\mathbb{D}).
\]
As shown in \cite[Section 2.1.2]{AbbFeiNov},
it follows from \eqref{S5} that there exist $\mu > 0$ and $q > 1$ such that
\begin{equation} \label{S6}
F(\mathbb{D}) \geq \mu \left| \mathbb{D} - \frac{1}{d} {\rm tr}[\mathbb{D}] \mathbb{I} \right|^q
\ \mbox{for all}\ |\mathbb{D}| > 1.
\end{equation}

We are ready to introduce the concept of \emph{dissipative solution} to the problem \eqref{P1}--\eqref{P5}.

\begin{Definition}[Dissipative solution] \label{SD1}
Let $\Omega \subset \R^d$, $d=2,3$ be a bounded Lipschitz domain. The quantity $[\vr, \vu]$ is called
\emph{dissipative solution} of the problem \eqref{P1}--\eqref{P5} in $[0, \infty) \times \Omega$ if:
\begin{itemize}
\item
\[
\begin{split}
\vr \geq 0, \vr &\in C_{\rm weak, loc}([0, \infty); L^\gamma(\Omega)),\\
\vu &\in L^q([0, \infty); W^{1,q}_0(\Omega; \R^d)),\
\vm \equiv \vr \vu \in C_{\rm weak,loc}([0, \infty); L^{\frac{2 \gamma}{\gamma + 1}}(\Omega; \R^d));
\end{split}
\]
\item
the integral identity
\begin{equation} \label{W1}
\left[ \intO{ \vr \varphi } \right]_{t = 0}^{t = \tau} =
\int_0^\tau \intO{ \Big[ \vr \partial_t \varphi + \vr \vu \cdot \Grad \varphi \Big] } \dt,\
\vr(0, \cdot) = \vr_0,
\end{equation}
holds
for any $\tau \geq 0$, and any test function $\varphi \in C^1_{\rm loc}([0,\infty) \times \Ov{\Omega})$;
\item
there exist
\[
\mathbb{S} \in L^1_{\rm loc}([0,T) \times \Omega; \R^{d \times d}_{\rm sym}),\ \mathfrak{R} \in L^\infty(0,T; \mathcal{M}^+(\Ov{\Omega}; \R^{d \times d}_{\rm sym})),
\]
such that the integral identity
\begin{equation} \label{S8}
\begin{split}
\left[ \intO{ \vr \vu \cdot \bfphi } \right]_{t=0}^{t = \tau} &=
\int_0^\tau \intO{ \Big[ \vr \vu \cdot \partial_t \bfphi + \vr \vu \otimes \vu : \Grad \bfphi
+ p(\vr) \Div \bfphi - \mathbb{S} : \Grad \bfphi \Big] }\\
&+ \int_0^\tau \int_{{\Omega}} \Grad \bfphi : \D \ \mathfrak{R}(t) \ \dt,\ \vr \vu(0,\cdot) = \vm_0,
\end{split}
\end{equation}
holds for any $\tau \geq 0$ and any test function $\bfphi \in C^1_c([0,\infty) \times {\Omega}; \R^d)$;
\item
the energy inequality
\begin{equation} \label{S7}
\begin{split}
\intO{ E\left( \vr, \vm \Big| \Ov{\vr} \right) (\tau, \cdot) } &+ D \int_{\Ov{\Omega}} \D\ {\rm tr} [\mathfrak{R}](\tau) +
\int_0^\tau \intO{ \Big( F(\Ds \vu) + F^* (\mathbb{S}) \Big) } \dt \\ &\leq \intO{ E\left( \vr_0, \vm_0 \Big| \Ov{\vr} \right) }
\end{split}
\end{equation}
holds for a.e. $\tau \geq 0$, where
\[
D = \min \left\{ \frac{1}{2}; \frac{\underline{a}}{d} \right\}.
\]
\end{itemize}
\end{Definition}

The dissipative solutions have been introduced in \cite{AbbFeiNov}, specifically Definition 2.1 and Remarks 2.2, 2.3. The constant $D$
was computed explicitly as pointed out in \cite[Remark 2.3]{AbbFeiNov}. In \eqref{S7}, the kinetic energy
is defined as a convex l.s.c. function of $(\vr, \vm) \in \R^{d+1}$,
\[
\frac{1}{2} \frac{|\vm|^2}{\vr} = \left\{ \begin{array}{l} \frac{1}{2} \frac{|\vm|^2}{\vr}
\ \mbox{if}\ \vr > 0, \\ \\ 0 \ \mbox{if} \ \vr = 0, \vm = 0, \\ \\
\infty \ \mbox{otherwise.} \end{array} \right.
\]

\subsection{Turbulent energy and maximal dissipation}

In order to define the maximal solutions, we first introduce the \emph{turbulent energy} $\mathcal{E}$,
\begin{itemize}
\item
\[
\mathcal{E} \in L^\infty(0, \infty);
\]
\item
\[
\begin{split}
\intO{ \left[ \frac{1}{2} \frac{|\vm|^2}{\vr} + P(\vr) - P'(\Ov{\vr})(\vr - \Ov{\vr}) - P(\Ov{\vr}) \right]
} &+ D \int_{\Ov{\Omega}} \D\ {\rm tr} [\mathfrak{R}] \leq \mathcal{E} \\
\leq \int_\Omega \Big[ \frac{1}{2} \frac{|\vm_0|^2}{\vr_0} + P(\vr_0) - P'(\Ov{\vr})&(\vr_0 - \Ov{\vr}) - P(\Ov{\vr}) \Big]
\ \dx
\ \mbox{a.e. in}\ (0, \infty);
\end{split}
\]
\item
\begin{equation} \label{S10}
\frac{{\rm d}}{{\rm d}t} \mathcal{E} \leq - \intO{ \left[ F(\Ds \vu) + F^*(\mathbb{S}) \right] }
\ \mbox{in}\ \mathcal{D}'(0, \infty).
\end{equation}
\end{itemize}

In general, the turbulent energy $\mathcal{E}$ is not uniquely determined by $[\vr, \vu]$ and Reynolds defect $\mathfrak{R}$, however, at least
one turbulent energy exists. Indeed,
in view of the energy inequality \eqref{S7}, we may take
\[
\mathcal{E}(\tau) = \intO{ E\left( \vr_0, \vm_0 \Big| \Ov{\vr} \right) } -
\int_0^\tau \intO{ \Big( F(\Ds \vu) + F^* (\mathbb{S}) \Big) } \dt,
\]
where the right--hand side is non--increasing. Moreover, given $[\vr, \vu]$ we can modify the Reynolds defect $\mathfrak{R}$,
\[
\mathfrak{R} \approx \mathfrak{R} + \chi(t) \mathbb{I},\ \chi \in L^\infty (0, \infty),\ \chi \geq 0,
\]
without changing the momentum balance \eqref{S8} in such a way that
\begin{equation} \label{S9}
\begin{split}
\intO{ \left[ \frac{1}{2} \frac{|\vm|^2}{\vr} + P(\vr) - P'(\Ov{\vr})(\vr - \Ov{\vr}) - P(\Ov{\vr}) \right](\tau, \cdot)
} + D \int_{\Ov{\Omega}} \D\ {\rm tr} [\mathfrak{R}](\tau) = \mathcal{E}(\tau)
\ \mbox{for a.a.}\ \tau \in (0, \infty) \\
\mathcal{E}(0+) = \intO{ E \left(\vr_0, \vm_0 \Big| \Ov{\vr} \right) }.
\end{split}
\end{equation}
In the rest of the paper, we restrict ourselves to the dissipative solutions for which the turbulent energy
$\mathcal{E}$ given by \eqref{S9} satisfies
\eqref{S10}. For definitness, we identify $\mathcal{E}$ with its c\` adl\` ag version,
\[
\mathcal{E}(\tau) = \mathcal{E}(\tau+).
\]

Motivated by Dafermos \cite{Daf55}, \cite{Daf4}, we introduce the concept of maximal solution.
Given two dissipative solutions $[\vr_1, \vu_1]$, $[\vr_2, \vu_2]$ emanating from the same initial data
$[\vr_0, \vm_0]$, with the associated turbulent energy $\mathcal{E}_1$, $\mathcal{E}_2$, we say that
\begin{equation} \label{S11}
[\vr^1, \vu^1] \prec [\vr^2, \vu^2] \ \Leftrightarrow \ \mathcal{E}^1  \leq \mathcal{E}^2 \ \mbox{in}\ [0, \infty).
\end{equation}
To define a maximal solution we first introduce the set
\[
\begin{split}
\mathcal{U}[\vr_0, \vm_0] = \Big\{ &[\vr, \vu, \mathcal{E}] \ \Big|\
[\vr, \vu] \ \mbox{--a dissipative solutions with the initial data}\
[\vr_0, \vm_0] \\ &\mbox{and the associated turbulent energy}\ \mathcal{E} \Big\}
\end{split}
\]

\begin{Definition}[Maximal solution] \label{SD2}

We say that a dissipative solution $[\vr, \vu]$ emanating from the initial data $[\vr_0, \vm_0]$ with the associated
turbulent energy $\mathcal{E}$ is \emph{maximal}
if it is minimal with respect to the relation ``$\prec$'' among all dissipative solutions in $\mathcal{U}[\vr_0, \vm_0]$.
More specifically, if $[\tvr, \tvu]$ is another dissipative solution starting from $[\vr_0, \vm_0]$ with the
associated turbulent energy $\widetilde{\mathcal{E}}$ satisfying
\[
\widetilde{\mathcal{E}} \leq \mathcal{E} \ \mbox{then}\ \widetilde{\mathcal{E}} = \mathcal{E}.
\]

\end{Definition}

\begin{Remark} \label{RD1}

Seeing that
\[
\intO{ P'(\Ov{\vr})(\vr - \Ov{\vr}) - P(\Ov{\vr}) } = - \intO{P(\Ov{\vr})} \ \mbox{- a constant}
\]
we may consider the turbulent energy in a more concise form
\[
\mathcal{E}(\tau) = \intO{ \left[ \frac{1}{2} \frac{|\vm|^2}{\vr} + P(\vr) \right](\tau, \cdot)
} + D \int_{\Ov{\Omega}} \D\ {\rm tr} [\mathfrak{R}](\tau)
\]
independent of the total mass $M = \Ov{\vr} |\Omega|$.

\end{Remark}

\subsection{Existence of maximal solutions}
\label{EM}

The existence of a maximal solution can be proved following the line of arguments used in \cite{BreFeiHof19}. To begin, it is easy to observe that
a minimizer of the functional
\[
I [ \vr, \vu, \mathfrak{R} ] = \int_0^\infty \exp(-t) \mathcal{E} (t) \ \dt
\]
over the set of all dissipative solutions in $\mathcal{U}[\vr_0, \vm_0]$ is a maximal solution in the sense of
Definition \ref{SD2}. Here, the turbulent energy $\mathcal{E}$ is given in terms of $[\vr, \vu, \mathfrak{R}]$ through
\eqref{S9}.

Let $\left\{ [\vr_n, \vu_n] \right\}_{n=1}^\infty$, with the associated $\{ \mathcal{E}_n \}_{n=1}^\infty$, be a minimizing sequence of $I$ on
$\mathcal{U}{[\vr_0, \vm_0]}$. In view of the uniform bounds resulting from the energy inequality \eqref{S7} and
Helly's theorem, we may extract a suitable subsequence (not relabeled) such that
\[
\begin{split}
\vr_n &\to \vr \ \mbox{in} \ C_{\rm weak, loc}([0, \infty); L^\gamma (\Omega)),\\
\vu_n &\to \vu \ \mbox{weakly in}\ L^q([0,\infty); W^{1,q}_0 (\Omega; \R^d)), \\
\vr_n \vu_n \equiv \vm_n &\to \vm \ \mbox{in} \ C_{\rm weak, loc}([0, \infty); L^{\frac{2 \gamma}{\gamma + 1}} (\Omega)),\\
\mathcal{E}_n &\to \mathcal{E} \ \mbox{pointwise in}\ [0, \infty),\\
\mathbb{S}_n &\to \mathbb{S} \ \mbox{weakly in}\ L^1_{\rm loc}([0,\infty) \times \Omega; \R^{d \times d}_{\rm sym}),\\
\mathfrak{R}_n &\to \mathfrak{R}^\infty \ \mbox{weakly-(*) in}\ L^\infty(0,\infty; \mathcal{M}(\Ov{\Omega}; \R^{d \times d}_{\rm sym})),\\
\mathfrak{R}^{\rm conv}_n \equiv \left( 1_{\vr_n > 0} \frac{\vm_m \otimes \vm_n}{\vr_n} -
1_{\vr > 0} \frac{ \vm \otimes \vm }{\vr} \right) &\to \mathfrak{R}^{\rm conv} \ \mbox{weakly-(*) in}\ L^\infty(0,\infty;
\mathcal{M}(\Ov{\Omega}; \R^{d \times d}_{\rm sym})),\\
\mathfrak{R}^p_n \equiv ( p(\vr_n) - p(\vr) ) &\to \mathfrak{R}^p\ \mbox{weakly-(*) in}\ L^\infty(0,\infty;
\mathcal{M}(\Ov{\Omega})),\\
\mathfrak{R}^{\rm kin}_n \equiv \left( \frac{1}{2} \frac{|\vm_n|^2}{\vr_n} - \frac{1}{2} \frac{|\vm|^2}{\vr} \right)
&\to \mathfrak{R}^{\rm kin}\ \mbox{weakly-(*) in}\ L^\infty(0,\infty;
\mathcal{M}(\Ov{\Omega})),\\
\mathfrak{R}^P_n \equiv ( P(\vr_n) - P(\vr) ) &\to \mathfrak{R}^P\ \mbox{weakly-(*) in}\ L^\infty(0,\infty;
\mathcal{M}(\Ov{\Omega})).
\end{split}
\]

Repeating the arguments used in \cite[Section 3.4]{AbbFeiNov}, we successively deduce that
\[
\vm = \vr \vu;
\]
\[
\mathcal{E} = \intO{ \left[ \frac{1}{2} \frac{|\vm|^2}{\vr} + P(\vr) - P'(\Ov{\vr})(\vr - \Ov{\vr}) - P(\Ov{\vr}) \right] }
+ \int_{\Ov{\Omega}} \D \mathfrak{R}^{\rm kin} + \int_{\Ov{\Omega}} \D \mathfrak{R}^{\rm P} +
D \int_{\Ov{\Omega}} \D \ {\rm tr}[\mathfrak{R}^\infty];
\]
\[
\left[ \intO{ \vr \varphi } \right]_{t = 0}^{t = \tau} =
\int_0^\tau \intO{ \Big[ \vr \partial_t \varphi + \vr \vu \cdot \Grad \varphi \Big] } \dt,\
\vr(0, \cdot) = \vr_0,
\]
for any $\tau \geq 0$, and any test function $\varphi \in C^1_{\rm loc}([0,\infty) \times \Ov{\Omega})$;
\[
\begin{split}
\left[ \intO{ \vr \vu \cdot \bfphi } \right]_{t=0}^{t = \tau} &=
\int_0^\tau \intO{ \Big[ \vr \vu \cdot \partial_t \bfphi + \vr \vu \otimes \vu : \Grad \bfphi
+ p(\vr) \Div \bfphi - \mathbb{S} : \Grad \bfphi \Big] }\\
+ \int_0^\tau \int_{\Ov{\Omega}}
\Grad \bfphi : &\D \Big[ \mathfrak{R}^{\rm conv} + \mathfrak{R}^p {\rm Id} \Big](t) \dt +
\int_0^\tau \int_{{\Omega}} \Grad \bfphi : \D \ \mathfrak{R}^\infty(t) \ \dt,\ \vr \vu(0,\cdot) = \vm_0,
\end{split}
\]
for any $\tau \geq 0$ and any test function $\bfphi \in C^1_c([0,\infty) \times {\Omega}; \R^d)$;
\[
\begin{split}
\frac{{\rm d}}{{\rm d}t} \mathcal{E} &\leq  -
\intO{ \Big( F(\Ds \vu) + F^* (\mathbb{S}) \Big) } \dt \ \mbox{in}\ \mathcal{D}'(0, \infty),\\
\mathcal{E}(0+) &= \intO{ E\left( \vr_0, \vm_0 \Big| \Ov{\vr} \right) }.
\end{split}
\]

Next, the convexity hypothesis \eqref{S1} implies that
\[
\mathfrak{R}^P \geq \frac{\underline{a}}{d} {\rm tr}[\mathfrak{R}^p {\rm Id}],
\ \mbox{while, obviously,}\ \mathfrak{R}^{\rm kin} \geq \frac{1}{2} {\rm tr}[\mathfrak{R}^{\rm conv}]
\]
Consequently, introducing a new Reynolds stress
\[
\mathfrak{R} = \mathfrak{R}^{\rm conv} + \mathfrak{R}^p \mathbb{I} + \mathfrak{R}^\infty
\in L^\infty(0, \infty; \mathcal{M}^+(\Ov{\Omega}; \R^{d \times d}_{\rm sym}))
\]
we may infer that
\[
\mathcal{E}(\tau) \geq \intO{ \left[ \frac{1}{2} \frac{|\vm|^2}{\vr} + P(\vr) - P'(\Ov{\vr})(\vr - \Ov{\vr}) - P(\Ov{\vr}) \right] }
+ D \int_{\Ov{\Omega}} \D \ {\rm tr}[\mathfrak{R}]
\]
for a.e. $\tau \in (0, \infty)$. Thus modifying $\mathfrak{R}$
\[
\mathfrak{R} \approx \mathfrak{R} + \chi \mathbb{I},\ \chi \in L^\infty(0,\infty),\ \chi \geq 0,
\]
we achieve
\[
\mathcal{E}(\tau) = \intO{ \left[ \frac{1}{2} \frac{|\vm|^2}{\vr} + P(\vr) - P'(\Ov{\vr})(\vr - \Ov{\vr}) - P(\Ov{\vr}) \right] }
+ D \int_{\Ov{\Omega}} \D \ {\rm tr}[\mathfrak{R}]
\]
for a.e. $\tau \in (0, \infty)$. In other words, $[\vr, \vu]$ is a dissipative solution, with the associated turbulent energy
$\mathcal{E}$ minimizing the functional $I$; whence maximal.

We have shown the following result.

\begin{Proposition}[Existence of maximal solutions] \label{PS1}
Let $\Omega \subset \R^d$, $d=2,3$ be a bounded Lipschitz domain. Suppose that $F$ and $p$ comply with the hypotheses \eqref{S1},
\eqref{S3}, \eqref{S5}. Let the initial data $[\vr_0, \vm_0]$ be given,
\[
\vr_0 \geq 0,\ \intO{ E\left( \vr_0, \vm_0 \ \Big| \Ov{\vr} \right) } < \infty.
\]

Then the problem \eqref{P1}--\eqref{P5} admits a maximal dissipative solution $[\vr, \vu]$ in $(0, \infty) \times
\Omega$ in the sense specified in Definition \ref{SD2}, meaning minimal with respect to the relation $\prec$ in
$\mathcal{U}[\vr_0, \vm_0]$.

\end{Proposition}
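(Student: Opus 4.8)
The plan is to realize a maximal solution as a minimizer of a functional that is linear and monotone in the turbulent energy, following the strategy of \cite{BreFeiHof19}. First I would record the elementary reduction: with $\mathcal{E}$ expressed through $[\vr,\vu,\mathfrak{R}]$ as in \eqref{S9}, set
\[
I[\vr, \vu, \mathfrak{R}] = \int_0^\infty \exp(-t)\, \mathcal{E}(t) \ \dt.
\]
If $[\vr,\vu,\mathfrak{R}]$ minimizes $I$ over $\mathcal{U}[\vr_0,\vm_0]$, then it is minimal for the order ``$\prec$'': any competitor with $\widetilde{\mathcal{E}} \leq \mathcal{E}$ and $\widetilde{\mathcal{E}}(\tau_0) < \mathcal{E}(\tau_0)$ for some $\tau_0$ would, by right-continuity of the chosen representatives, satisfy $\widetilde{\mathcal{E}} < \mathcal{E}$ on a right neighborhood of $\tau_0$, hence strictly lower $I$. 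Since $\mathcal{U}[\vr_0,\vm_0] \neq \emptyset$ by \cite[Theorem 3.8]{AbbFeiNov} together with the construction of a turbulent energy recorded after \eqref{S10}, and $I \geq 0$, the infimum is finite, and it suffices to exhibit a minimizing sequence $\{[\vr_n,\vu_n,\mathfrak{R}_n]\}_{n=1}^\infty$ along which one can pass to the limit.

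The core is the compactness already displayed before the statement. From the energy inequality \eqref{S7}, combined with \eqref{S2} and \eqref{S6}, I would read off uniform bounds on $\vr_n$, $\vm_n$, $\Ds \vu_n$, $\mathbb{S}_n$, $\mathfrak{R}_n$, $\mathcal{E}_n$. Since each $t \mapsto \mathcal{E}_n(t)$ is non-increasing, Helly's selection theorem gives a (non-relabeled) subsequence with $\mathcal{E}_n \to \mathcal{E}$ pointwise on $[0,\infty)$; the weak-in-time continuity of $\vr_n$ and $\vm_n$, together with the bounds on their time derivatives coming from \eqref{W1} and \eqref{S8}, gives convergence in $C_{\rm weak, loc}$; weak compactness in $L^q$ and weak-(*) compactness of bounded sets of measures take care of $\vu_n$, $\mathbb{S}_n$ and the Reynolds terms. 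I would then reproduce the limit passage of \cite[Section 3.4]{AbbFeiNov}: identify $\vm = \vr \vu$; pass to the limit in the continuity identity \eqref{W1} and the momentum identity \eqref{S8}, where the defects $\vr_n \vu_n \otimes \vu_n - \vr \vu \otimes \vu \rightharpoonup \mathfrak{R}^{\rm conv}$ and $p(\vr_n) - p(\vr) \rightharpoonup \mathfrak{R}^p$ emerge; and pass to the limit in the energy inequality, using weak lower semicontinuity of the convex functionals associated with $F$, $F^*$ and with the kinetic and pressure-potential energies, which generates the further defects $\mathfrak{R}^{\rm kin}$, $\mathfrak{R}^P$ and the differential inequality \eqref{S10} for $\mathcal{E}$.

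It then remains to repackage the defects into a single admissible Reynolds stress. Convexity of the kinetic energy yields $\mathfrak{R}^{\rm kin} \geq \frac{1}{2} {\rm tr}[\mathfrak{R}^{\rm conv}]$, and convexity of $P - \underline{a} p$ from \eqref{S1} yields $\mathfrak{R}^P \geq \frac{\underline{a}}{d} {\rm tr}[\mathfrak{R}^p \mathbb{I}]$; together with $D = \min\{\frac{1}{2}, \frac{\underline{a}}{d}\}$ these inequalities show that
\[
\mathfrak{R} := \mathfrak{R}^{\rm conv} + \mathfrak{R}^p \mathbb{I} + \mathfrak{R}^\infty \in L^\infty\big(0, \infty; \mathcal{M}^+(\Ov{\Omega}; \R^{d \times d}_{\rm sym})\big)
\]
and that the limiting $\mathcal{E}$ dominates $\intO{ E(\vr, \vm | \Ov{\vr}) } + D \int_{\Ov{\Omega}} \D \ {\rm tr}[\mathfrak{R}]$ for a.e. $\tau$. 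A final adjustment $\mathfrak{R} \approx \mathfrak{R} + \chi(t) \mathbb{I}$, $\chi \geq 0$, which leaves \eqref{S8} untouched, restores equality as in \eqref{S9}. Hence $[\vr, \vu]$ equipped with this $\mathfrak{R}$ is a dissipative solution whose associated turbulent energy is precisely the limit $\mathcal{E}$; and since $\mathcal{E}_n \to \mathcal{E}$ pointwise and boundedly, dominated convergence gives $I[\vr,\vu,\mathfrak{R}] = \lim_{n\to\infty} I[\vr_n,\vu_n,\mathfrak{R}_n] = \inf I$, so this solution is maximal. The main obstacle is precisely the limit passage in the nonlinear convective and pressure terms and the lower semicontinuity of the dissipation $F(\Ds \vu) + F^*(\mathbb{S})$ under merely weak convergence, i.e.\ showing that the oscillation and concentration defects are absorbed into a single positive semidefinite measure whose trace is controlled by the energy defect; this is exactly the content of \cite[Section 3.4]{AbbFeiNov}, so the remaining work is to check that those arguments stay compatible with the extra bookkeeping of $\mathcal{E}$ and Helly's theorem, and that the minimizing property passes to the limit.
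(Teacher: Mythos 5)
Your proposal follows essentially the same route as the paper: minimizing $I[\vr,\vu,\mathfrak{R}]=\int_0^\infty e^{-t}\mathcal{E}(t)\,\dt$ over $\mathcal{U}[\vr_0,\vm_0]$, extracting a minimizing sequence via the energy bounds and Helly's theorem, repeating the limit passage of \cite[Section 3.4]{AbbFeiNov}, and absorbing the defects into a single positive semi--definite Reynolds stress using the convexity inequalities behind $D=\min\{\tfrac12,\tfrac{\underline{a}}{d}\}$, followed by the adjustment $\mathfrak{R}\approx\mathfrak{R}+\chi\mathbb{I}$. The argument is correct and matches the paper's proof, with your added explicit justification that a minimizer of $I$ is $\prec$--minimal being a harmless elaboration of what the paper states as an easy observation.
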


\section{Long--time behavior of maximal solutions}
\label{L}

We are ready to state our main result concerning the long time behavior of maximal solutions.

\begin{Theorem}[Long time behavior] \label{TL1}
Let $\Omega \subset \R^d$, $d=2,3$ be a bounded Lipschitz domain. Suppose that $F$ and $p$ comply with the hypotheses \eqref{S1},
\eqref{S3}, \eqref{S5}. Let $[\vr, \vu]$ be a maximal dissipative solution the problem \eqref{P1}--\eqref{P5}
in $(0, \infty) \times \Omega$ in $\mathcal{U}[\vr_0, \vm_0]$, with the associate Reynolds defect $\mathfrak{R}$ and the turbulent energy $\mathcal{E}$,
\[
\mathcal{E}(\tau) \to \mathcal{E}_\infty \ \mbox{as}\ \tau \to \infty.
\]

Then
\[
\intO{ \left[ \frac{1}{2} \frac{|\vm|^2}{\vr} + P(\vr) - P'(\Ov{\vr})(\vr - \Ov{\vr}) - P(\Ov{\vr}) \right](\tau, \cdot)
} \to \mathcal{E}_\infty \ \mbox{as}\ \tau \to \infty,
\]
in particular,
\[
{\rm ess}\lim_{\tau \to \infty} \left\| \mathfrak{R}(\tau) \right\|_{\mathcal{M}(\Ov{\Omega}; \R^{d \times d}_{\rm sym})}
= 0.
\]

\end{Theorem}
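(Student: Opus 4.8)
The plan is to exploit maximality through a shift-in-time construction. Fix a sequence $\tau_n \to \infty$. Define shifted solutions $[\vr_n, \vu_n](t, \cdot) = [\vr, \vu](t + \tau_n, \cdot)$ with the associated Reynolds defect $\mathfrak{R}_n(t) = \mathfrak{R}(t+\tau_n)$ and turbulent energy $\mathcal{E}_n(t) = \mathcal{E}(t+\tau_n)$. Since $\mathcal{E}$ is non-increasing and converges to $\mathcal{E}_\infty$, we have $\mathcal{E}_n \to \mathcal{E}_\infty$ uniformly on compact time intervals, and in particular $\frac{\dd}{\dd t}\mathcal{E}_n \to 0$ in $\mathcal{D}'(0,\infty)$, which by \eqref{S10} forces
\[
\int_0^\infty \intO{ \Big( F(\Ds \vu_n) + F^*(\mathbb{S}_n) \Big) } \dt \to 0.
\]
Using the uniform energy bounds (the total energy is bounded by $\mathcal{E}(0+)$ for all time) together with Helly's theorem, extract a subsequence converging as in Section \ref{EM}: $\vr_n \to \tvr$ in $C_{\rm weak, loc}$, $\vu_n \to \tvu$ weakly in $L^q_{\rm loc}(W^{1,q}_0)$, $\vm_n \to \tvm = \tvr \tvu$ in $C_{\rm weak, loc}$, $\mathcal{E}_n \to \mathcal{E}_\infty$ (constant!), and the various Reynolds stresses converge weakly-(*) to limit measures, producing a limiting Reynolds defect $\widetilde{\mathfrak{R}} \in L^\infty(0,\infty; \mathcal{M}^+)$.

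The first key point is that the limit $[\tvr, \tvu, \widetilde{\mathfrak{R}}]$ is itself a dissipative solution, with initial data $[\tvr_0, \tvm_0] = [\tvr(0,\cdot), \tvm(0,\cdot)]$, and with constant turbulent energy $\mathcal{E}_\infty$. By the same arguments as in Section \ref{EM} (lower semicontinuity of $F$ and $F^*$ under the weak convergences, convexity of the pressure potential bounding the oscillation defect, and the reconstruction of $\widetilde{\mathfrak{R}}$ from $\mathfrak{R}^{\rm conv}$, $\mathfrak{R}^p\mathbb{I}$ and $\mathfrak{R}^\infty$) the limit satisfies \eqref{W1}, \eqref{S8}, and the differential energy inequality with $\frac{\dd}{\dd t}\mathcal{E}_\infty = 0 \leq -\intO{(F(\Ds\tvu) + F^*(\tilde{\mathbb{S}}))}$. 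Since $\mathcal{E}_\infty$ is constant, this gives $F(\Ds\tvu) + F^*(\tilde{\mathbb{S}}) = 0$ a.e., hence $\Ds\tvu = 0$, and with the no-slip boundary condition and Korn's inequality $\tvu = 0$; then $\tvm = 0$ and from the equation $\tvr$ is independent of $t$ with $\Grad p(\tvr) = 0$ in the sense of \eqref{S8}, so $\tvr \equiv \Ov{\vr}$ by strict monotonicity of $p$. Consequently, in the identity
\[
\mathcal{E}_\infty = \intO{ E\left( \tvr, \tvm \,\Big|\, \Ov{\vr} \right) } + \int_{\Ov{\Omega}} \D\, \mathfrak{R}^{\rm kin} + \int_{\Ov{\Omega}} \D\, \mathfrak{R}^{P} + D \int_{\Ov{\Omega}} \D\, {\rm tr}[\mathfrak{R}^\infty],
\]
the first term vanishes for a.e.\ time, so the whole of $\mathcal{E}_\infty$ is carried by the defect measures.

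The second, and essential, key point is where maximality enters: I claim that along a maximal solution the turbulent energy \emph{cannot} develop a persistent defect gap, i.e.\ we must in fact have $\widetilde{\mathfrak{R}} = 0$ and hence $\mathcal{E}_\infty = 0$ is impossible only if $\mathcal{E}_\infty > 0$ — so the argument must instead show directly that
\[
\intO{ E\left( \vr, \vm \,\Big|\, \Ov{\vr} \right)(\tau_n, \cdot) } \to \mathcal{E}_\infty.
\]
Here is the mechanism: the shifted solution $[\vr_n, \vu_n]$ is maximal in $\mathcal{U}[\vr_n(0,\cdot), \vm_n(0,\cdot)]$ (a shift of a maximal solution is maximal — this is immediate from Definition \ref{SD2} since $\prec$ is defined pointwise and the constant $D$ is fixed). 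If, for a positive-measure set of times $t$, the limit had $\int_{\Ov{\Omega}}\D\,{\rm tr}[\widetilde{\mathfrak{R}}](t) > 0$ while the energy part is zero, one constructs a \emph{competitor} dissipative solution in $\mathcal{U}[\tvr_0, \tvm_0]$, namely the stationary solution $[\Ov{\vr}, 0]$ with zero Reynolds defect and turbulent energy $\equiv 0 < \mathcal{E}_\infty$; the stationary state genuinely is a dissipative solution from these data only if $\tvm_0 = 0$ and $\tvr_0 = \Ov{\vr}$, which we have just proved. Since $0 < \mathcal{E}_\infty$, this contradicts maximality of the limiting solution — \emph{provided} maximality passes to the limit. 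Thus the crux is a stability statement: the limit of a sequence of maximal solutions (with converging initial data) is maximal. I would prove this by a diagonal/contradiction argument: if the limit were dominated by some other dissipative solution with strictly smaller turbulent energy at some time, one perturbs it back along the approximating sequence using weak-strong-type stability of the energy inequality to contradict maximality of $[\vr_n, \vu_n]$ for $n$ large. Once maximality of the limit is in hand, $\mathcal{E}_\infty = 0$, so $\widetilde{\mathfrak{R}} = 0$, the defect measures vanish, and by the structure of the weak-(*) limits the concentrations $\mathfrak{R}^{\rm conv}_n, \mathfrak{R}^p_n, \mathfrak{R}^{\rm kin}_n, \mathfrak{R}^P_n, \mathfrak{R}^\infty_n$ all tend to zero; in particular $\intO{E(\vr,\vm|\Ov{\vr})(\tau_n,\cdot)} \to 0 = \mathcal{E}_\infty$, and since $\tau_n$ was an arbitrary sequence the full limit follows. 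The final conclusion ${\rm ess}\lim_{\tau\to\infty}\|\mathfrak{R}(\tau)\|_{\mathcal{M}} = 0$ is then read off from the identity in Remark \ref{RD1}: $D\int_{\Ov{\Omega}}\D\,{\rm tr}[\mathfrak{R}](\tau) = \mathcal{E}(\tau) - \intO{[\tfrac12|\vm|^2/\vr + P(\vr)]}$, whose right-hand side tends to $\mathcal{E}_\infty - \mathcal{E}_\infty = 0$, and for positive semi-definite matrix measures the total variation norm is controlled by the trace.

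The main obstacle I anticipate is precisely the passage of \emph{maximality} to the time-shifted limit: dissipativity, the energy inequality, and the equations all pass to the limit by now-standard lower-semicontinuity and compactness (this is exactly Section \ref{EM}), but showing that the limit remains \emph{minimal} with respect to $\prec$ requires comparing it against \emph{arbitrary} competitors in $\mathcal{U}[\tvr_0,\tvm_0]$, and it is not a priori obvious that a competitor beating the limit can be lifted to a competitor beating $[\vr_n,\vu_n]$ for large $n$. I expect this to need either a quantitative stability estimate for the turbulent-energy functional $I$ along the minimizing procedure of Section \ref{EM}, or an argument specific to the limiting data $[\Ov{\vr},0]$ — for which the unique dissipative solution is the trivial one, so that any nonzero $\mathcal{E}_\infty$ is manifestly non-minimal — thereby sidestepping the need for full stability of maximality and reducing everything to the identification $\tvr_0 = \Ov{\vr}$, $\tvm_0 = 0$ carried out above.
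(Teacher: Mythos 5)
Your time--shift strategy does not close, and it is aimed at a stronger conclusion than the theorem actually makes. Theorem \ref{TL1} does not assert $\mathcal{E}_\infty = 0$; it only asserts that asymptotically the turbulent energy is carried entirely by $\intO{ E(\vr,\vm\,|\,\Ov{\vr}) }$, i.e.\ that the defect $\mathfrak{R}$ vanishes. Your argument tries to identify the shifted limit as the equilibrium $[\Ov{\vr},0]$, but that identification requires strong convergence of the density (effective viscous flux / monotonicity arguments, Bogovskii estimates, the renormalized continuity equation), none of which is available under the hypotheses \eqref{S1}, \eqref{S3}, \eqref{S5} alone --- this is precisely the content of Theorem \ref{TC1} and the reason the paper imposes \eqref{A6}--\eqref{A9a} there. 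Note also that the limiting momentum balance still contains $\Div\widetilde{\mathfrak{R}}$, so you do not even get $\Grad p(\tvr)=0$ for free. Without $\tvr=\Ov{\vr}$ the claim that ``the energy part is zero'' fails and the rest of your mechanism has nothing to act on. More fundamentally, you correctly identify but do not resolve the crux: even if the shifted limit were shown to be non--maximal for \emph{its own} initial data, that says nothing about $[\vr,\vu]$, which can only be disqualified by a competitor in $\mathcal{U}[\vr_0,\vm_0]$, i.e.\ one emanating from the \emph{original} data. The stability statement ``maximality passes to the time--shifted limit'' is left unproven, and it is in fact not needed.

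The paper's proof is much shorter and involves no limit passage. By convexity of $E$ and weak time--continuity of $\vr$, $\vm$ one has $\limsup_{\tau\to\infty}\intO{ E(\vr,\vm\,|\,\Ov{\vr})(\tau,\cdot) }\leq \mathcal{E}_\infty$, so it suffices to prove $\intO{ E(\vr,\vm\,|\,\Ov{\vr})(T,\cdot) }\geq \mathcal{E}_\infty$ for every finite $T$. If this fails at some $T$, Proposition \ref{PS1} provides a dissipative solution restarted from the data $[\vr(T,\cdot),\vm(T,\cdot)]$ whose turbulent energy starts at $\intO{ E(\vr,\vm\,|\,\Ov{\vr})(T,\cdot) } < \mathcal{E}_\infty \leq \mathcal{E}(\tau)$ and is non--increasing; gluing it to $[\vr,\vu]$ on $[0,T)$ produces a competitor in $\mathcal{U}[\vr_0,\vm_0]$ strictly below $[\vr,\vu]$ in the order $\prec$, contradicting maximality. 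This restart--and--glue construction is the idea missing from your proposal: it converts a gap between $\intO{ E(\vr,\vm\,|\,\Ov{\vr}) }$ and $\mathcal{E}$ at a \emph{finite} time directly into a violation of maximality of the original trajectory. The final assertion on $\mathfrak{R}$ then follows, as you note at the end, from \eqref{S9} and the fact that the trace controls the total variation of a positive semi--definite matrix--valued measure.
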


\begin{proof}

As the energy $E \left(\vr, \vm \ \Big| \Ov{\vr} \right)$ is convex and the functions $\vr$, $\vm$ weakly continuous in the
time variable, we have
\[
\mathcal{E}_\infty \geq \limsup_{\tau \to \infty} \intO{ \left[ \frac{1}{2} \frac{|\vm|^2}{\vr} + P(\vr) - P'(\Ov{\vr})(\vr - \Ov{\vr}) - P(\Ov{\vr}) \right](\tau, \cdot)
}.
\]
Consequently, it is enough to show
\begin{equation} \label{L1}
\intO{ \left[ \frac{1}{2} \frac{|\vm|^2}{\vr} + P(\vr) - P'(\Ov{\vr})(\vr - \Ov{\vr}) - P(\Ov{\vr}) \right](T, \cdot)}
\equiv \intO{ E \left( \vr, \vm \Big| \Ov{\vr} \right)(T, \cdot) }
\geq \mathcal{E}_\infty
\end{equation}
for any $T \geq 0$.

Arguing by contradiction we suppose there exists $T \geq 0$ such that
\[
\mathcal{E}_\infty > \intO{ E \left( \vr, \vm \Big| \Ov{\vr} \right)(T, \cdot) }.
\]
In accordance with Proposition \ref{PS1}, the problem \eqref{P1}--\eqref{P5} admits a dissipative solution
$[\vr_T, \vu_T]$ in $(T, \infty) \times \Omega$, starting from the initial data $[\vr(T, \cdot), \vm(T, \cdot)]$, and such that the associated turbulent energy $\mathcal{E}_T$ satisfies
\begin{equation} \label{L2}
\mathcal{E}_T (\tau) \leq \intO{ E \left( \vr, \vm \Big| \Ov{\vr} \right)(T, \cdot) } < \mathcal{E}_\infty \leq
\mathcal{E} (\tau) \ \mbox{for all}\ \tau \geq T.
\end{equation}

Finally, we define a new dissipative solution solution $[\tvr, \tvu]$,
\[
[\tvr, \tvu](\tau, \cdot) = \left\{ \begin{array}{l} {[\vr, \vu]}(\tau, \cdot) \ \mbox{if}\ 0 \leq \tau < T,\\ \\
{[\vr_T, \vu_T]}(\tau, \cdot) \ \mbox{if}\ \tau \geq T. \end{array} \right.
\]
with the associated turbulent energy
\[
\widetilde{\mathcal{E}}(\tau) = \left\{ \begin{array}{l} \mathcal{E} (\tau) \ \mbox{if}\ 0 \leq \tau < T,\\ \\
\mathcal{E}_T (\tau) \ \mbox{if}\ \tau \geq T. \end{array} \right.
\]
In view of \eqref{L2}, however, $[\tvr, \tvu] \prec [\vr, \vu]$ and $[\vr, \vu]$ is not maximal in contrast
with our hypothesis.

\end{proof}

\section{Concergence to equilibria}
\label{C}

In order to establish convergence of dissipative solutions to equilibria, extra hypotheses must be imposed on the structural properties
of $F$ and $p$, specifically, on the the exponents $\gamma$ and $q$ appearing in \eqref{S2} and \eqref{S6}, respectively.

\subsection{Renormalization}

In addition to \eqref{W1}, we need its renormalized version
\begin{equation} \label{A1}
\left[ \intO{ B(\vr) \varphi } \right]_{t = 0}^{t = \tau} =
\int_0^\tau \intO{ \left[ B(\vr) \partial_t \varphi + B(\vr) \vu \cdot \Grad \varphi
+ \Big( B(\vr) - B'(\vr) \vr \Big) \Div \vu \right] } \dt
\end{equation}
to be satisfied
for any $0 \leq \tau \leq T$, any test function $\varphi \in C^1_c([0,\infty) \times \Ov{\Omega})$, and any $B \in C^1(\R)$,\
$B' \in C_c(\R)$.

In accordance with the DiPerna--Lions theory \cite{DL}, the renormalized equation \eqref{A1} follows from \eqref{W1} as soon as
\begin{equation} \label{A2}
\frac{1}{\gamma} + \frac{1}{q} \leq 1.
\end{equation}

\subsection{Bounds on kinetic energy}
\label{BK}

In order to prove convergence to equilibria, better bounds on the kinetic energy are necessary. More specifically, we need
\begin{equation} \label{A3}
\sup_{T \geq 0} \int_T^{T+1} \intO{ \vr^\alpha |\vu|^{2\alpha} } \dt < \infty \ \mbox{for some}\ \alpha > 1.
\end{equation}
To obtain \eqref{A3}, we write
\[
\| \vr |\vu|^2 \|_{L^\alpha (\Omega)} \leq  \| \vr \vu  \|_{L^{\frac{2 \gamma}{\gamma + 1}}(\Omega; \R^d)}
\| \vu \|_{L^p(\Omega; \R^d)},\ \frac{1}{\alpha} = \frac{\gamma + 1}{2 \gamma} + \frac{1}{p}.
\]
On the other hand, by Sobolev's embedding theorem,
\[
\begin{split}
\| \vu \|_{L^\infty(\Omega; \R^d)} &\aleq \| \vu \|_{W^{1,q}(\Omega; \R^d)} \ \mbox{if}\ q > d,\\
\| \vu \|_{L^p(\Omega; \R^d)} &\aleq \| \vu \|_{W^{1,q}(\Omega; \R^d)} \ \mbox{for any}
\ 1 \leq p < \infty \ \mbox{if} \ q = d,\\
\| \vu \|_{L^p(\Omega; \R^d)} &\aleq \| \vu \|_{W^{1,q}(\Omega; \R^d)}\ \mbox{for}\
1 \leq p \leq \frac {dq}{d - q} \ \mbox{if}\ q < d.
\end{split}
\]
Consequently, the desired bound \eqref{A3} follows from the energy inequality \eqref{S7} as soon as
\begin{equation} \label{A4}
\frac{\gamma + 1}{2 \gamma} + \frac{d - q}{dq} < 1.
\end{equation}

Moreover, a short inspection of the existence proof in \cite{AbbFeiNov} reveals that the traceless part of the Reynolds stress
$\mathfrak{R}$,
\[
\mathfrak{R} - \frac{1}{d}{\rm tr} [\mathfrak{R}] \ \mathbb{I} = \Ov{ \frac{ \vm \otimes \vm }{\vr} -
\frac{1}{d} \frac{|\vm|^2}{\vr} \mathbb{I} } - \left( \frac{\vm \otimes \vm}{\vr} - \frac{1}{d} \frac{|\vm|^2}{\vr} \mathbb{I} \right),
\]
admits a bound similar to \eqref{A3}, namely,
\begin{equation} \label{A5}
\sup_{T \geq 0} \int_T^{T+1} \intO{ \left| \mathfrak{R} - \frac{1}{d}{\rm tr} [\mathfrak{R}]
\mathbb{Id} \right|^\alpha } \dt < \infty \ \mbox{for some}\ \alpha > 1.
\end{equation}
In other words, under the hypothesis \eqref{A4}, there exists a dissipative solution with the associated Reynolds stress
$\mathfrak{R}$ satisfying \eqref{A5}.

\begin{Remark} \label{RC2}

As a matter of fact, the traceless part
\[
\mathfrak{R} - \frac{1}{d}{\rm tr} [\mathfrak{R}] \ \mathbb{I} = \Ov{ \frac{ \vm \otimes \vm }{\vr} -
\frac{1}{d} \frac{|\vm|^2}{\vr} \mathbb{I} } - \left( \frac{\vm \otimes \vm}{\vr} - \frac{1}{d} \frac{|\vm|^2}{\vr} \mathbb{I} \right)
\]
actually \emph{vanishes} for the dissipative solutions constructed by the method of \cite{AbbFeiNov}. Indeed the sequence of approximate solutions $\left\{ [\vr_n, \vu_n] \right\}_{n=1}^\infty$ constructed in \cite[Section 3.4]{AbbFeiNov} satisfies
\[
\vr_n \vu_n \to \vr \vu \ \mbox{in}\ C_{\rm weak}([0,T]; L^{\frac{2 \gamma}{\gamma+ 1}}(\Omega; \R^d)),\
\vu_n \to \vu \ \mbox{weakly in}\ L^q([0,T]; W^{1,q}_0(\Omega; \R^d))
\]
for arbitrary $T > 0$. Moreover, the hypothesis \eqref{A4} implies that
\[
L^{\frac{2 \gamma}{\gamma + 1}}(\Omega; {\rm weak}) \hookrightarrow\hookrightarrow W^{-1,q}(\Omega);
\]
whence
\[
\Ov{ \frac{ \vm \otimes \vm }{\vr} -
\frac{1}{d} \frac{|\vm|^2}{\vr} \mathbb{I} } - \left( \frac{\vm \otimes \vm}{\vr} - \frac{1}{d} \frac{|\vm|^2}{\vr} \mathbb{I} \right)
= 0
\]

\end{Remark}

\subsection{Bounds on the viscous stress}

In addition to the lower bound \eqref{S5}, we suppose that
\begin{equation} \label{A6}
F (\mathbb{D}) \aleq 1 + |\mathbb{D}|^r\ \mbox{for some}\ r < \infty, \ \partial F(0) = \{ 0 \},
\end{equation}
which implies
\begin{equation} \label{A7}
F^*(\mathbb{S}) > 0 \ \mbox{for all}\ \mathbb{S} \ne 0,\
F^*(\mathbb{S}) \ageq |\mathbb{S}|^\alpha \ \mbox{for some}\ \alpha > 1, \ \mbox{and all}\
|\mathbb{S}| \geq 1.
\end{equation}

\subsection{Convergence}

We are ready to state our main result concerning convergence to equilibria of dissipative solutions.

\begin{Theorem}[Convergence to equilibria] \label{TC1}
Let $\Omega \subset \R^d$, $d=2,3$ be a bounded Lipschitz domain. Suppose that $F$ and $p$ comply with the hypotheses \eqref{S1},
\eqref{S3}, \eqref{S5}, and \eqref{A6}. In addition, suppose that the exponents $\gamma$ and $q$ appearing
in \eqref{S2} and \eqref{S6}, respectively, satisfy
\begin{equation} \label{A8}
\frac{\gamma + 1}{2 \gamma} + \frac{d - q}{dq} < 1.
\end{equation}
Let $[\vr, \vu]$ be a dissipative solution to the problem \eqref{P1}--\eqref{P5} satisfying the renormalized
equation of continuity \eqref{A1}, with the associated Reynolds stress $\mathfrak{R}$ such that
\begin{equation} \label{A9}
\sup_{T \geq 0} \int_T^{T+1} \intO{ \left| \mathfrak{R} - \frac{1}{d}{\rm tr} [\mathfrak{R}]
\mathbb{Id} \right|^\alpha } \dt < \infty \ \mbox{for some}\ \alpha > 1.
\end{equation}
Finally, suppose that
\begin{equation} \label{A9a}
{\rm ess}\lim_{\tau \to \infty} \left\| \mathfrak{R}(\tau) \right\|_{\mathcal{M}(\Ov{\Omega}; \R^{d \times d}_{\rm sym})}
= 0.
\end{equation}

Then
\[
\vr \vu (\tau, \cdot) \to 0 \ \mbox{in}\ L^{\frac{2 \gamma}{\gamma + 1}}(\Omega; \R^d),\
\vr(\tau, \cdot) \to \Ov{\vr}\ \mbox{in}\ L^\gamma(\Omega), \ \Ov{\vr} = \frac{1}{|\Omega|} \intO{ \vr_0},
\]
as $\tau \to \infty$.
\end{Theorem}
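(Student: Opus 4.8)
The plan is to reduce everything to the single claim $\mathcal{E}_\infty = 0$, where $\mathcal{E}_\infty := \lim_{\tau \to \infty} \mathcal{E}(\tau)$ (the limit exists because $\mathcal{E}$ is non-increasing by \eqref{S10} and bounded below). Once $\mathcal{E}_\infty = 0$ is known the two stated convergences follow immediately: writing $\mathcal{P}(\vr) := P(\vr) - P'(\Ov{\vr})(\vr - \Ov{\vr}) - P(\Ov{\vr}) \ge 0$ for the Bregman (elastic) potential, the identity \eqref{S9} and hypothesis \eqref{A9a} give $\intO{ \left[ \frac{1}{2} |\vm|^2/\vr + \mathcal{P}(\vr) \right](\tau) } = \mathcal{E}(\tau) - D \int_{\Ov\Omega} \D\, {\rm tr}[\mathfrak{R}](\tau) \to \mathcal{E}_\infty = 0$; the coercivity of $\mathcal{P}$ ($\mathcal{P}(\vr) \gtrsim |\vr - \Ov\vr|^2$ near $\Ov\vr$ and $\mathcal{P}(\vr) \gtrsim \vr^\gamma$ for $\vr$ large, by \eqref{S1}--\eqref{S2}) then forces $\vr(\tau) \to \Ov\vr$ in $L^\gamma(\Omega)$, and Hölder's inequality $\| \vm \|_{L^{2\gamma/(\gamma+1)}}^{2\gamma/(\gamma+1)} \le \big( \intO{ |\vm|^2/\vr } \big)^{\gamma/(\gamma+1)} \big( \intO{ \vr^\gamma } \big)^{1/(\gamma+1)}$ forces $\vm(\tau) \to 0$ in $L^{2\gamma/(\gamma+1)}(\Omega)$.

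To prove $\mathcal{E}_\infty = 0$ I would argue by contradiction, assuming $\mathcal{E}_\infty > 0$. From the energy inequality, $\int_0^\infty \intO{ F(\Ds\vu) + F^*(\mathbb{S}) } \dt < \infty$, hence $\int_T^{T+1} \intO{ F(\Ds\vu) + F^*(\mathbb{S}) } \dt \to 0$ as $T \to \infty$. Combining the coercivity $F(\mathbb{D}) \ge \mu |\mathbb{D} - \frac1d {\rm tr}[\mathbb{D}]\mathbb{I}|^q$ for $|\mathbb{D}| > 1$ from \eqref{S6} with the bound $F(\mathbb{D}) \ge A_1(|\mathbb{D} - \frac1d {\rm tr}[\mathbb{D}]\mathbb{I}|)$ obtained from \eqref{S5} at $\mathbb{D} = 0$, the trace-free part of $\Ds\vu$ is bounded in $L^q((T,T+1) \times \Omega)$ uniformly in $T$ and tends to $0$ in $L^1$; by the Korn inequality for the trace-free symmetric gradient and interpolation, $\| \vu \|_{L^r((T,T+1); W^{1,r}_0(\Omega;\R^d))} \to 0$ for each $1 < r < q$, while \eqref{A7} (a consequence of \eqref{A6}) yields $\| \mathbb{S} \|_{L^1((T,T+1) \times \Omega)} \to 0$. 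Hypothesis \eqref{A8} is precisely what gives the embedding $W^{1,r}_0(\Omega) \hookrightarrow L^{2\gamma/(\gamma - 1)}(\Omega)$ for $r$ close enough to $q$, so that $\intO{ \vr |\vu|^2 } \lesssim \| \vr \|_{L^\gamma} \| \vu \|_{W^{1,r}_0}^2$; therefore one can select a sequence of good times $t_n \in (n,n+1)$ along which simultaneously $\intO{ \vr |\vu|^2 (t_n,\cdot) } \to 0$, $\| \mathfrak{R}(t_n) \|_{\mathcal{M}(\Ov\Omega; \R^{d \times d}_{\rm sym})} \to 0$ (using \eqref{A9a}), and $\intO{ \left( F(\Ds\vu) + F^*(\mathbb{S}) \right)(t_n, \cdot) } < \infty$. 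Along $t_n$ one has $\vm(t_n) \to 0$ in $L^{2\gamma/(\gamma+1)}$ and, by \eqref{S9}, $\intO{ \mathcal{P}(\vr(t_n,\cdot)) } \to \mathcal{E}_\infty$.

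Next I would pass to the limit in the time shifts $[\vr_n, \vu_n, \mathbb{S}_n, \mathfrak{R}_n](t,\cdot) := [\vr,\vu,\mathbb{S},\mathfrak{R}](t_n + t, \cdot)$ on a fixed window $t \in (-1/2, 1/2)$. The uniform bounds and Helly's theorem furnish, for a subsequence, $\vr_n \to \tilde\vr$ in $C_{\rm weak}([-1/2,1/2]; L^\gamma(\Omega))$, $\vu_n \to 0$ strongly in $L^r((-1/2,1/2); W^{1,r}_0)$, $\mathbb{S}_n \to 0$, $\mathfrak{R}_n \to 0$; hence $\vr_n \vu_n \to 0$, the limit equation of continuity forces $\tilde\vr = \tilde\vr(x)$ independent of time with $\intO{ \tilde\vr } = M$, and $\vm_n \to 0$. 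The crucial step is to upgrade $\vr_n(0,\cdot) = \vr(t_n,\cdot) \rightharpoonup \tilde\vr$ to strong convergence in $L^\gamma(\Omega)$; this is where the renormalized equation \eqref{A1}, the kinetic-energy bound \eqref{A8} and the Reynolds-stress bound \eqref{A9} enter. I would run the by-now-standard compactness argument of \cite{AbbFeiNov} (truncations $T_k(\vr_n)$, the effective viscous flux identity, the oscillations defect measure, monotonicity of $p$), observing that here it simplifies drastically: since $\vu_n \to 0$ strongly, the convective and viscous contributions to the effective flux vanish in the limit — the flux reducing to $-p(\vr_n)$ — and the commutator terms in the renormalized continuity equation disappear. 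Granting strong convergence of the density, $\intO{ \mathcal{P}(\vr(t_n,\cdot)) } \to \intO{ \mathcal{P}(\tilde\vr) }$, so $\intO{ \mathcal{P}(\tilde\vr) } = \mathcal{E}_\infty$; moreover one may pass to the limit in the momentum balance (convective, viscous and Reynolds terms $\to 0$, pressure term $\to \Grad p(\tilde\vr)$) to obtain $\Grad p(\tilde\vr) = 0$ in $\mathcal{D}'$, whence $p(\tilde\vr)$ is constant and, $p$ being strictly increasing, $\tilde\vr \equiv \Ov\vr = M/|\Omega|$. Then $\mathcal{E}_\infty = \intO{ \mathcal{P}(\Ov\vr) } = 0$, contradicting $\mathcal{E}_\infty > 0$.

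I expect the genuine obstacle to be exactly the strong convergence of the density along $t_n$ — that is, excluding asymptotic oscillations and concentrations of $\vr$; the hypotheses \eqref{A1}, \eqref{A8}, \eqref{A9} are tailored precisely so that the density-compactness machinery of \cite{AbbFeiNov} applies to the shifted solutions, the simplifying feature being that $\vu_n \to 0$ collapses the effective flux to the pressure. Everything else — the coercivity manipulations, the selection of good times, the passage to the limit in the linear terms, and the extraction of the two norm convergences from $\intO{ E(\vr, \vm \,|\, \Ov\vr)(\tau) } \to 0$ — is routine.
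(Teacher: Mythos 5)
Your overall strategy is the same as the paper's: reduce the claim to $\mathcal{E}_\infty=0$, pass to time shifts, use the integrability of the dissipation to send $\vu$ (and $\mathbb{S}$, $\mathfrak{R}$) to zero, obtain compactness of the shifted densities by a monotonicity argument based on the renormalized continuity equation, identify the limit density as $\Ov{\vr}$ from the limit momentum balance, and conclude that the energy tends to zero. The contradiction framing and the choice of the truncation/effective-viscous-flux variant of the monotonicity step are only cosmetic differences (the paper's version is in fact simpler: since the limit velocity vanishes, it works directly with $B(\vr)=\vr^\alpha$, tests the momentum equation with the Bogovskii operator $\mathcal{B}\bigl[[B(\vr)]_\ep-\langle [B(\vr)]_\ep\rangle\bigr]$, and compares the resulting inequality \eqref{A17} with the identity obtained by testing $\Grad\Ov{p(\vr)}=0$ against $\mathcal{B}[\Ov{\vr^\alpha}-\langle\Ov{\vr^\alpha}\rangle]$).

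The genuine gap is that you never establish, nor even mention, the uniform-in-time higher integrability of the pressure and of the pressure potential, i.e. the Bogovskii estimates $\sup_{T\ge0}\int_T^{T+1}\intO{p(\vr)\vr^\beta}\dt<\infty$ and $\sup_{T\ge0}\int_T^{T+1}\intO{P(\vr)\vr^\beta}\dt<\infty$ (\eqref{A13}--\eqref{A14}), which constitute the main technical step of the paper's proof and are exactly where \eqref{A7}, \eqref{A8}, \eqref{A9} and the regularity properties \eqref{A10a}, \eqref{A10b} of $\mathcal{B}$ are consumed, uniformly with respect to the time shift. These bounds are needed twice. First, without them $p(\vr_n)$ and $P(\vr_n)$ are only bounded in $L^\infty_t L^1_x$, so their weak limits may contain concentrations, and the quantities $\Ov{p(\vr)}$, $\Ov{p(\vr)\vr^\alpha}$ entering your ``standard'' monotonicity machinery are not weak $L^1$ limits; the whole effective-flux/oscillation-defect argument presupposes this equi-integrability. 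Second, and decisively, your final step ``granting strong convergence of the density, $\intO{\mathcal{P}(\vr(t_n,\cdot))}\to\intO{\mathcal{P}(\tilde\vr)}$'' does not follow from convergence in measure (or a.e.) alone: Fatou only gives $\intO{\mathcal{P}(\tilde\vr)}\le\mathcal{E}_\infty$, which is the useless direction and cannot exclude that the residual energy $\mathcal{E}_\infty>0$ survives as a concentration of $P(\vr)$ on sets of vanishing measure even though $\vr(t_n)\to\Ov{\vr}$ in measure. It is precisely the time-uniform bound \eqref{A14} that provides the equi-integrability of $P(\vr_n)$ needed to pass to the limit in the potential energy and close the argument ($\mathcal{E}_\infty=0$). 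So the ingredient you dismiss as part of the ``routine'' surrounding steps is in fact the core of the proof, and as written your proposal cannot be completed without adding it.
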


\begin{Remark} \label{RC1}

As pointed out in Section \ref{BK}, the problem \eqref{P1}--\eqref{P5} admits a dissipative solution satisfying
\eqref{A9} as soon as the hypothesis \eqref{A8} holds.

\end{Remark}

\begin{Remark} \label{RC3}

As stated in Theorem \ref{TL1}, the hypothesis \eqref{A9a} holds for any \emph{maximal} dissipative solution in the sense of
Definition \ref{SD2}.

\end{Remark}

The rest of this section is devoted to the proof of Theorem \ref{TC1}, carried over in several steps.

\subsubsection{Uniform pressure estimates}

In order to derive the estimates implying equi--integrability of the pressure $p(\vr)$, we introduce the so--called
Bogovskii operator $\mathcal{B}$ that may be seen as a suitable branch of the inverse divergence $\Div^{-1}$,
see Bogovskii \cite{BOG}. For reader's convenience, we recall the basic properties of the operator $\mathcal{B}$
proved in Bogovskii \cite{BOG}, Galdi \cite{GAL}, and Geissert, Heck, and Hieber \cite{GEHEHI} (see also \cite[Theorem 11.17] {FeNo6A}).

\begin{itemize}
\item
\[
\mathcal{B}: \left\{ f \in L^p(\Omega)\ \Big| \ \intO{ f } = 0 \right\} \to
W^{1,p}_0(\Omega; \R^d)
\]
is a bounded linear operator for any $1 < p < \infty$, specifically,
\begin{equation} \label{A10a}
\| \mathcal{B} [f] \|_{W^{1,p}_0(\Omega;\R^d)} \aleq \| f \|_{L^p(\Omega)},\ 1 < p < \infty.
\end{equation}
\item
\[
\Div \mathcal{B}[f]  = f;
\]
\item if $f \in L^p(\Omega)$, $\intO{ f } = 0$, and, in addition,
\[
f = \Div \vc{g},\ \vc{g} \in L^r(\Omega; \R^d), \ \Div \vc{g} \in L^p(\Omega),\ \vc{g} \cdot \vc{n}|_{\partial \Omega} = 0,
\]
then
\begin{equation} \label{A10b}
\| \mathcal{B}[f] \|_{L^r(\Omega; \R^d)} \aleq \| \vc{g} \|_{L^r(\Omega; \R^d)},\ 1 < r < \infty;
\end{equation}
\item
if $f \in W^{k,p}(\Omega)$, k=1,2,\dots, $1 < p < \infty$, $\intO{ f } = 0$, then
$\mathcal{B}[f] \in W^{k+1,p}(\Omega; \R^d)$.

\end{itemize}

It follows from \eqref{A1} that the renormalized equation of continuity holds in $(0, \infty) \times R^d$ provided $\vr$, $\vu$ are extended to be zero outside $\Omega$:
\[
\left[ \int_{\R^d} B(\vr) \varphi \ \dx \right]_{t = 0}^{t = \tau} =
\int_0^\tau \int_{\R^d} \left[ B(\vr) \partial_t \varphi + B(\vr) \vu \cdot \Grad \varphi
+ \Big( B(\vr) - B'(\vr) \vr \Big) \Div \vu \right] \ \dx \dt
\]
for any $0 \leq \tau \leq T$, any test function $\varphi \in C^1_c([0,\infty) \times \R^d)$, and any $B \in C^1(\R)$,\
$B' \in C_c(\R)$. Consequently, we may apply the standard regularization procedure via convolution with a family of
regularizing kernels $\{ \theta_\ep \}_{\ep > 0}$ in the $x-$variable to obtain
\[
\partial_t [B(\vr)]_\ep + \Div \left( [B(\vr)]_\ep \vu \right) +
\left[ \left( B'(\vr) \vr - B(\vr) \right) \Div \vu \right]_\ep =
E_\ep
\]
with the error term,
\[
E_\ep =
\Div \left( [B(\vr)]_\ep \vu \right) -
\left[ \Div (B(\vr) \vu) \right]_\ep,
\]
where we have denoted $[v]_\ep = \theta_\ep * v$. In view of \eqref{S6} and a version of Korn--Poincar\' e inequality,
\[
\vu \in L^q(0,T; W^{1,q}_0 (\Omega; \R^d)).
\]
As $B$ is bounded, we may use the DiPerna--Lions theory \cite{DL}, notably Friedrich's commutator lemma (see also
\cite[Lemma 11.12]{FeNo6A}), to conclude
\begin{equation} \label{A10}
E_\ep \to 0 \ \mbox{in}\ L^r_{\rm loc}(0,T; L^r(\Omega)) \ \mbox{as}\ \ep \to 0\ \mbox{for any}\ 1 \leq r < q.
\end{equation}

Next, we use
\[
\bfphi = \psi(t) \mathcal{B} \left[ [B(\vr)]_\ep - \frac{1}{|\Omega|} \intO{ [B(\vr)]_\ep } \right] ,\
\psi \in C^1_c(0, \infty),\ \psi \geq 0,
\]
as a test function in the momentum equation \eqref{S8}. After a straightforward manipulation, we obtain
\[
\begin{split}
\int_0^\infty \psi &\intO{ p(\vr) [B(\vr)]_\ep } \dt -
\frac{1}{|\Omega|} \int_0^\infty \psi \left( \intO{p(\vr)} \right) \left( \intO{ [B(\vr)]_\ep } \right) \dt\\
&+ \frac{1}{d} \int_0^\infty \psi \int_{\Omega}
[B(\vr)]_\ep \ \D {\rm tr}[\mathfrak{R}] \dt  - \frac{1}{d |\Omega|} \int_0^\infty \psi
\left( \int_\Omega \D {\rm tr}[\mathfrak{R}] \right) \left( \intO{ [B(\vr)]_\ep } \right) \dt \\ = -
\int_0^\infty &\psi \intO{ \Big[ \vr \vu \otimes \vu
- \mathbb{S}  + \left( \mathfrak{R}(t) - \frac{1}{d} {\rm tr}[\mathfrak{R}] \mathbb{I} \right)\Big]: \Grad \mathcal{B} \left[ [B(\vr)]_\ep - \frac{1}{|\Omega|} \intO{ [B(\vr)]_\ep } \right] } \dt \\
&- \int_0^\infty \partial_t \psi  \intO{ \vr \vu \cdot \mathcal{B} \left[ [B(\vr)]_\ep - \frac{1}{|\Omega|} \intO{ [B(\vr)]_\ep } \right]     } \dt\\
&+ \int_0^\infty \psi \intO{ \vr \vu \cdot \mathcal{B} [ \Div ( [B(\vr)]_\ep \vu) ] }\dt \\
&+ \int_0^\infty \psi \intO{ \vr \vu \cdot \mathcal{B} \left[ \left( B'(\vr) \vr - B(\vr) \right) \Div \vu -
\frac{1}{|\Omega|} \intO{ \left( B'(\vr) \vr - B(\vr) \right) \Div \vu } \right]_\ep }\dt\\
&- \int_0^\infty \psi \intO{ \vr \vu \cdot \mathcal{B}\left[ E_\ep - \frac{1}{|\Omega|} \intO{ E_\ep } \right] } \dt.
\end{split}
\]

Now observe that, in view of the hypothesis \eqref{A8}, the error estimate \eqref{A10}, and the regularization property of the operator
$\mathcal{B}$ stated in \eqref{A10a}, we may let $\ep \to 0$ obtaining
\begin{equation} \label{A12}
\begin{split}
\int_0^\infty \psi &\intO{ p(\vr) [B(\vr)] } \dt \leq
\frac{1}{|\Omega|} \int_0^\infty \psi \left( \intO{p(\vr)} \right) \left( \intO{ [B(\vr)] } \right) \dt\\
&+ \frac{1}{d |\Omega|} \int_0^\infty \psi
\left( \int_\Omega \D {\rm tr}[\mathfrak{R}] \right) \left( \intO{ [B(\vr)] } \right) \dt \\  -
\int_0^\infty &\psi \intO{ \Big[ \vr \vu \otimes \vu
- \mathbb{S}  + \left( \mathfrak{R}(t) - \frac{1}{d} {\rm tr}[\mathfrak{R}] \mathbb{I} \right)\Big]: \Grad \mathcal{B} \left[ [B(\vr)] - \frac{1}{|\Omega|} \intO{ [B(\vr)] } \right] } \dt \\
&- \int_0^\infty \partial_t \psi  \intO{ \vr \vu \cdot \mathcal{B} \left[ [B(\vr)] - \frac{1}{|\Omega|} \intO{ [B(\vr)] } \right]     } \dt\\
&+ \int_0^\infty \psi \intO{ \vr \vu \cdot \mathcal{B} [ \Div ( [B(\vr)] \vu) ] }\dt \\
&+ \int_0^\infty \psi \intO{ \vr \vu \cdot \mathcal{B} \left[ \left( B'(\vr) \vr - B(\vr) \right) \Div \vu -
\frac{1}{|\Omega|} \intO{ \left( B'(\vr) \vr - B(\vr) \right) \Div \vu } \right] }\dt.
\end{split}
\end{equation}

Finally, using the hypothesis \eqref{A9}, together with the bounds \eqref{A3}, \eqref{A7}, and the regularizing properties of
$\mathcal{B}$ stated in \eqref{A10a}, \eqref{A10b}, we may infer that
\begin{itemize}
\item validity of \eqref{A12} can be extended to $B(\vr) = \vr^\beta$ for some $\beta > 0$;
\item the integrals on the right--hand side of \eqref{A12} are uniformly bounded with respect to the time shifts of $\psi$.
\end{itemize}

We conclude that
\begin{equation} \label{A13}
\sup_{T \geq 0} \int_T^{T+1} \intO{ p(\vr) \vr^\beta } \dt < \infty \ \mbox{for some}\ \beta > 0,
\end{equation}
and, by virtue of the hypothesis \eqref{S1},
\begin{equation} \label{A14}
\sup_{T \geq 0} \int_T^{T+1} \intO{ P(\vr) \vr^\beta } \dt < \infty \ \mbox{for some}\ \beta > 0.
\end{equation}

\subsubsection{Convergence of density and momentum averages}

We are ready to show convergence to the equilibrium state. We introduce the time shifts
\[
\vr_n (t,x) = \vr(t + n,x),\ \vu_n (t,x) = \vu(t + n,x),\ \vm_n(t,x) = \vm(t + n,x) \ \mbox{etc.}
\]
In view of the bound
\[
\vu \in L^q(0, \infty; W^{1,q}_0(\Omega; \R^d)) < \infty,
\]
we have
\[
\vu_n \to 0 \ \mbox{in}\ L^q(0,1; W^{1,q}_0(\Omega; \R^d)).
\]
Moreover, as
\[
\vr \in L^\infty(0, \infty; L^\gamma(\Omega; \R^d)),
\]
and the kinetic energy is controlled by \eqref{A3}, we deduce that
\begin{equation} \label{A15}
\int_0^1 \intO{ \frac{1}{2} \frac{|\vm_n|^2}{\vr_n} } \to 0  \ \mbox{as}\ n \to \infty.
\end{equation}

The next step is to show strong a.e. convergence of $\{ \vr_n \}_{n=1}^\infty$. We start observing that
\[
\vr_n \to \vr_\infty \ \mbox{weakly-(*) in}\ L^\infty(0,1; L^\gamma(\Omega)), \ \vr_\infty \geq 0.
\]
Moreover, letting $n \to \infty$ in the equation of continuity \eqref{W1} we deduce
\begin{equation} \label{A15a}
\vr_\infty = \vr_\infty(x) \ \mbox{is independent of}\ t.
\end{equation}

Next, we perform the limit $n \to \infty$ in the momentum equation \eqref{S8}. Here, the crucial fact is that $\mathfrak{R}$ vanishes
as stated in \eqref{A9a}. Consequently, in accordance with \eqref{A7},
\[
\mathbb{S} \in L^\alpha (0, \infty; L^\alpha (\Omega; \R^{d \times d}_{\rm sym})),
\]
and we obtain
\begin{equation} \label{A16}
\Grad \Ov{p(\vr)} = 0 \ \mbox{in}\ \mathcal{D}'((0,1) \times \Omega),
\end{equation}
where
\[
p(\vr_n) \to \Ov{p(\vr)} \ \mbox{weakly in}\ L^1((0,T) \times \Omega).
\]
Here, similarly to \eqref{A15a}, the convergence holds up to a subsequence which we do not relabel.

In order to show strong convergence of $\{ \vr_n \}_{n=1}^\infty$, we consider \eqref{A12}, with
\[
B(\vr) = \vr^\alpha, \ \mbox{with} \ 0 < \alpha < \beta,
\]
where $\beta$ is the exponent in \eqref{A13}. Letting $n \to \infty$ in \eqref{A12} we obtain
\begin{equation} \label{A17}
\int_0^1 \intO{ \Ov{p(\vr) \vr^\alpha } } \dt \leq \frac{1}{|\Omega|} \intO{ \Ov{p(\vr)} } \intO{
\Ov{\vr^\alpha} },
\end{equation}
where, similarly to \eqref{A16}, the bar denotes the corresponding weak limits.

Now, testing \eqref{A16} on
\[
\psi \mathcal{B} \left[ \Ov{\vr^\alpha} - \frac{1}{|\Omega|} \intO{ \Ov{\vr^\alpha} }  \right]
\]
we obtain
\[
\int_0^1 \intO{ \Ov{p(\vr)} \ \Ov{\vr^\alpha} }  \dt = \frac{1}{|\Omega|} \intO{ \Ov{p(\vr)} } \intO{
\Ov{\vr^\alpha} },
\]
which, together with \eqref{A17}, gives rise to
\begin{equation} \label{A18}
\int_0^1 \intO{ \Ov{p(\vr) \vr^\alpha } } \leq \int_0^1 \intO{ \Ov{p(\vr)} \ \Ov{\vr^\alpha} }  \dt.
\end{equation}
As $p$ is strictly increasing, relation \eqref{A18} implies
\begin{equation} \label{A21}
\vr_n \to \vr \ \mbox{in measure in}\ (0,1) \times \Omega
\end{equation}
by means of the standard monotonicity argument (cf. \cite[Theorem 11.26]{FeNo6A}).

Finally, we deduce from \eqref{A16} that $\vr_\infty$ is independent of $x$; whence $\vr_\infty$ coincides with the constant equilibrium state
\[
\vr_\infty = \Ov{\vr}.
\]

Using \eqref{A15}, together with the uniform bound \eqref{A15} and the strong convergence of the density stated in
\eqref{A21}, we conclude that there is a sequence of times $\tau_n \to \infty$ such that
\[
\intO{ \left[ \frac{1}{2} \frac{|\vm|^2}{\vr} + P(\vr) - P'(\Ov{\vr})(\vr - \Ov{\vr}) - P(\Ov{\vr}) \right] (\tau_n, \cdot) } \to 0.
\]
As shown in Theorem \ref{TL1}, the energy functional admits a limit
\[
\intO{ \left[ \frac{1}{2} \frac{|\vm|^2}{\vr} + P(\vr) - P'(\Ov{\vr})(\vr - \Ov{\vr}) - P(\Ov{\vr}) \right] (\tau, \cdot) } \to
\mathcal{E}_\infty \ \mbox{as}\ \tau \to \infty;
\]
whence $\mathcal{E}_\infty = 0$ and the proof of Theorem \ref{TC1} is complete.

\section{Unconditional convergence}
\label{CC}

Theorem \ref{TC1} may seem rather awkward as extra hypotheses are imposed not only on the structural properties of $p$ and
$F$ but also on the solution itself. In this section, we try to remedy the problem at the expense of stronger restrictions on the
exponents $\gamma$ and $q$. As observed in Remark \ref{RC2}, the problem \eqref{P1}--\eqref{P5} admits
a dissipative solution with a vanishing traceless component of $\mathfrak{R}$ as soon as the exponents $\gamma$ and $q$ satisfy
\eqref{A8}. This motivates the following modification of the set $\mathcal{U}[\vr_0, \vm_0]$ that we replace by
\[
\begin{split}
\widetilde{\mathcal{U}}[\vr_0, \vm_0] = \Big\{ &[\vr, \vu, \mathcal{E}] \ \Big|\
[\vr, \vu] \ \mbox{--a dissipative solutions with the initial data}\
[\vr_0, \vm_0] \\ &\mbox{and the associated turbulent energy}\  \mathcal{E},\ \mathfrak{R} - \frac{1}{d} {\rm tr}[\mathfrak{R}] \mathbb{I} =0 \Big\}
\end{split}
\]

\begin{Theorem}[Unconditional convergence to equilibria] \label{TCC}
Let $\Omega \subset \R^d$, $d=2,3$ be a bounded Lipschitz domain. Suppose that $F$ and $p$ comply with the hypotheses \eqref{S1},
\eqref{S3}, \eqref{S5}, and \eqref{A6}. In addition, suppose that the exponents $\gamma$ and $q$ appearing
in \eqref{S2} and \eqref{S6}, respectively, satisfy
\begin{equation} \label{A88}
\frac{1}{\gamma} + \frac{1}{q} \leq 1 \ \mbox{if}\ q > \frac{d}{2},\
\frac{\gamma + 1}{2 \gamma} + \frac{d - q}{dq} < 1
\ \mbox{if}\ q \leq \frac{d}{2}.
\end{equation}
Let $[\vr, \vu]$ be a solution to the problem \eqref{P1}--\eqref{P5} maximal in $\widetilde{\mathcal{U}}[\vr_0, \vm_0]$
in the sense of Definition \ref{SD2}, meaning minimal in $\widetilde{\mathcal{U}}[\vr_0, \vm_0]$ with respect to the relation $\prec$.

Then
\[
\vr \vu (\tau, \cdot) \to 0 \ \mbox{in}\ L^{\frac{2 \gamma}{\gamma + 1}}(\Omega; \R^d),\
\vr(\tau, \cdot) \to \Ov{\vr}\ \mbox{in}\ L^\gamma(\Omega), \ \Ov{\vr} = \frac{1}{|\Omega|} \intO{ \vr_0},
\]
as $\tau \to \infty$.

\end{Theorem}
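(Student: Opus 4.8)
The plan is to derive Theorem~\ref{TCC} from Theorems~\ref{TL1} and~\ref{TC1}, after verifying that a solution maximal in $\widetilde{\mathcal{U}}[\vr_0,\vm_0]$ exists and meets all the hypotheses of Theorem~\ref{TC1}. The first, purely arithmetic, step is that the hypothesis \eqref{A88} implies, in \emph{both} of its regimes, the conditions \eqref{A8} (equivalently \eqref{A4}) and \eqref{A2}. Rewrite \eqref{A8} as $\frac{1}{2\gamma}+\frac1q<\frac12+\frac1d$. If $q\le d/2$ then \eqref{A88} \emph{is} \eqref{A8}, and since $\frac1d\le\frac{1}{2q}$ we get $\frac{1}{2\gamma}+\frac1q<\frac12+\frac{1}{2q}$, i.e. $\frac1\gamma+\frac1q<1$, which is \eqref{A2}. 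If $q>d/2$ then \eqref{A88} reads $\frac1\gamma+\frac1q\le1$, i.e. \eqref{A2}, and then $\frac{1}{2\gamma}+\frac1q\le\frac12+\frac{1}{2q}<\frac12+\frac1d$ because $\frac{1}{2q}<\frac1d$, which is \eqref{A8}. Since \eqref{A2} holds in either case, every dissipative solution in $\widetilde{\mathcal{U}}[\vr_0,\vm_0]$ satisfies the renormalized equation of continuity \eqref{A1}.

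Second, I would show that $\widetilde{\mathcal{U}}[\vr_0,\vm_0]$ is non-empty and admits a maximal element. Non-emptiness is Remark~\ref{RC2}: under \eqref{A8} the scheme of \cite{AbbFeiNov} produces a dissipative solution whose Reynolds stress has vanishing traceless part. A maximal element is obtained exactly as in Proposition~\ref{PS1}: a minimizer of $I[\vr,\vu,\mathfrak{R}]=\int_0^\infty\exp(-t)\,\mathcal{E}(t)\,\dt$ over $\widetilde{\mathcal{U}}[\vr_0,\vm_0]$ is maximal, and the limit of a minimizing sequence stays in $\widetilde{\mathcal{U}}$ because each of the three defects forming the limit Reynolds stress $\mathfrak{R}=\mathfrak{R}^{\rm conv}+\mathfrak{R}^p\mathbb{I}+\mathfrak{R}^\infty$ has vanishing traceless part — $\mathfrak{R}^{\rm conv}$ by the argument of Remark~\ref{RC2} (which uses only $\vr_n\vu_n\to\vr\vu$ in $C_{\rm weak}$, $\vu_n\rightharpoonup\vu$ in $L^q(W^{1,q}_0)$, and \eqref{A8}), $\mathfrak{R}^p\mathbb{I}$ trivially, and $\mathfrak{R}^\infty$ as a weak-$(*)$ limit of traceless-free tensors — and the concluding correction $\mathfrak{R}\approx\mathfrak{R}+\chi\mathbb{I}$ changes only the trace.

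Third, I would re-run the proof of Theorem~\ref{TL1} inside $\widetilde{\mathcal{U}}[\vr_0,\vm_0]$. As $\mathcal{E}$ is non-increasing by \eqref{S10} and bounded below, $\mathcal{E}(\tau)\to\mathcal{E}_\infty$. If the conclusion ${\rm ess}\lim_{\tau\to\infty}\|\mathfrak{R}(\tau)\|_{\mathcal{M}(\Ov{\Omega};\R^{d\times d}_{\rm sym})}=0$ failed, there would be $T\ge0$ with $\mathcal{E}_\infty>\intO{E\left(\vr,\vm\,\Big|\,\Ov{\vr}\right)(T,\cdot)}$; restarting at time $T$ with the dissipative solution from Proposition~\ref{PS1} with data $[\vr(T,\cdot),\vm(T,\cdot)]$ — which by Remark~\ref{RC2} and \eqref{A8} may be chosen with vanishing traceless Reynolds stress — and concatenating exactly as in the proof of Theorem~\ref{TL1} yields a dissipative solution with Reynolds stress spherical on each of $[0,T)$ and $[T,\infty)$, hence belonging to $\widetilde{\mathcal{U}}[\vr_0,\vm_0]$, and strictly smaller than $[\vr,\vu]$ with respect to $\prec$, contradicting minimality. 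Hence \eqref{A9a} holds for $[\vr,\vu]$.

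Finally, $[\vr,\vu]$ satisfies every hypothesis of Theorem~\ref{TC1}: \eqref{S1}, \eqref{S3}, \eqref{S5}, \eqref{A6} are assumed; \eqref{A8} and the renormalized equation \eqref{A1} hold by the first step; \eqref{A9} is trivial since the traceless part of $\mathfrak{R}$ vanishes identically on $\widetilde{\mathcal{U}}[\vr_0,\vm_0]$, so its $\alpha$-th power integrates to zero for any $\alpha>1$; and \eqref{A9a} was established in the third step. Theorem~\ref{TC1} then yields $\vr\vu(\tau,\cdot)\to0$ in $L^{\frac{2\gamma}{\gamma+1}}(\Omega;\R^d)$ and $\vr(\tau,\cdot)\to\Ov{\vr}$ in $L^\gamma(\Omega)$ as $\tau\to\infty$, which is the assertion. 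I expect the only delicate point to be bookkeeping rather than analysis: ensuring that the constraint ``the traceless part of $\mathfrak{R}$ vanishes'' is preserved both through the minimization of $I$ and through the restart step of Theorem~\ref{TL1}; this is precisely where Remark~\ref{RC2}, and hence the splitting in \eqref{A88} via \eqref{A8}, is indispensable.
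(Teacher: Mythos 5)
Your proposal is correct and follows essentially the same route as the paper: check the arithmetic of Remark \ref{RCC} so that the renormalized equation \eqref{A1} is available, observe that \eqref{A9} is trivial on $\widetilde{\mathcal{U}}[\vr_0, \vm_0]$, obtain \eqref{A9a} by re-running the proof of Theorem \ref{TL1} inside $\widetilde{\mathcal{U}}[\vr_0, \vm_0]$ (using Remark \ref{RC2} so that the restarted, concatenated solution stays in that class), and then conclude via Theorem \ref{TC1}. Your additional details --- the explicit verification that \eqref{A88} implies \eqref{A2} and \eqref{A8}, and the existence of a maximal element of $\widetilde{\mathcal{U}}[\vr_0, \vm_0]$ by minimizing $I$ within that class --- merely flesh out steps the paper leaves implicit.
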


\begin{Remark} \label{RCC}

It is easy to check that \eqref{A88} implies
\[
\frac{1}{\gamma} + \frac{1}{q} \leq 1 \ \mbox{and} \ \frac{\gamma + 1}{2 \gamma} + \frac{d - q}{dq} < 1.
\]

\end{Remark}

\begin{proof}

First observe that, in view the hypothesis \eqref{A88} and Remark \ref{RC2}, the set
$\widetilde{\mathcal{U}}[\vr_0, \vm_0]$ is non--empty for any finite energy data $[\vr_0, \vm_0]$.
Now it is enough to show that any $[\vr, \vu]$ maximal in $\widetilde{\mathcal{U}}[\vr_0, \vm_0]$ satisfies the hypotheses of Theorem \ref{TC1}.

To begin,
\[
\mathfrak{R} - \frac{1}{d} {\rm tr}[\mathfrak{R}] \mathbb{I} = 0,
\]
in particular the hypothesis \eqref{A9} holds.

Next, repeating the arguments of the proof of Theorem \ref{TL1} we can show that the Reynolds stress $\mathfrak{R}$ associated
to $[\vr, \vu]$ vanishes for $\tau \to \infty$ as required in \eqref{A9a}.

Finally, it follows from \eqref{A88} (cf. Remark
\ref{RCC}) and the DiPerna--Lions theory \cite{DL}, that $[\vr, \vu]$ satisfies the renormalized equation of continuity \eqref{A1}. Thus the solution $[\vr, \vu]$ complies with all hypotheses of Theorem \ref{TC1}, which completes the proof.

\end{proof}

\section{Concluding remarks}
\label{D}

The results presented above can be extended in a straightforward manner to the system driven by a potential external force:
\[
\begin{split}
\partial_t \vr + \Div (\vr \vu) &= 0, \\
\partial_t (\vr \vu) + \Div (\vr \vu \otimes \vu) + \Grad p(\vr) &= \Div \mathbb{S} + \vr \Grad G,\ G = G(x).
\end{split}
\]
Indeed the corresponding energy functional reads
\[
\intO{ \left[ \frac{1}{2} \frac{|\vm|^2}{\vr} + P(\vr) - \vr G \right] },
\]
which can be rewritten as
\[
\intO{ \left[ \frac{1}{2} \frac{|\vm|^2}{\vr} + P(\vr) - P'(\tvr) (\vr - \tvr) - P(\tvr) \right] }
\]
modulo an additive constant. Here $\tvr$ is the associated equilibrium state solving
\begin{equation} \label{D1}
\Grad \tvr = \tvr \Grad G \ \mbox{in}\ \Omega.
\end{equation}
Apparently, equation \eqref{D1} admits infinitely many solution, however uniqueness can be restored for certain potentials
$F$ by prescribing the total mass
\begin{equation} \label{D2}
M = \intO{ \tvr }.
\end{equation}
As shown in \cite{FP7}, \cite{FP9}, the problem \eqref{D1}, \eqref{D2} admits a unique non--negative solution $\tvr$ as soon as the level sets
\[
[ G > k ] = \left\{ x \in \Omega \ \Big|\ G(x) > k \right\}
\]
are connected for any $k$. Under these circumstances, Theorems \ref{TL1}, \ref{TC1}, \ref{TCC} remain valid with obvious modifications in the proof.

\centerline{\bf Acknowledgement}

The paper was written when E.F. was visiting the Dong-A University in Busan. He gladly acknowledges the hospitality and support provided.


\def\cprime{$'$} \def\ocirc#1{\ifmmode\setbox0=\hbox{$#1$}\dimen0=\ht0
  \advance\dimen0 by1pt\rlap{\hbox to\wd0{\hss\raise\dimen0
  \hbox{\hskip.2em$\scriptscriptstyle\circ$}\hss}}#1\else {\accent"17 #1}\fi}

\end{document}